\theoremstyle{plain}
\theoremstyle{plain}
\newtheorem{theorem}{Theorem}[section]
\newtheorem{thm}[theorem]{Theorem}
\newtheorem*{thm*}{Theorem}
\newtheorem{cor}[theorem]{Corollary}
\newtheorem{prop}[theorem]{Proposition}
\newtheorem{lem}[theorem]{Lemma}
\newtheorem{defn}[theorem]{Definition}
\theoremstyle{definition}
\newtheorem{rem}[theorem]{Remark}
\newcommand{\bC}{{\mathbb{C}}}
\newcommand{\bF}{{\mathbb{F}}}
\newcommand{\bN}{{\mathbb{N}}}
\newcommand{\bQ}{{\mathbb{Q}}}
\newcommand{\bR}{{\mathbb{R}}}
\newcommand{\bT}{{\mathbb{T}}}
\newcommand{\bZ}{{\mathbb{Z}}}
\newcommand{ \bm}{{\mathbf{m}}}
\newcommand{ \bn}{{\mathbf{n}}}
\newcommand{\bt}{{\mathbf{t}}}
  \newcommand{\A}{{\mathcal{A}}}
  \newcommand{\E}{{\mathcal{E}}}
\renewcommand{\O}{{\mathcal{O}}}
\renewcommand{\P}{{\mathcal{P}}}
\renewcommand{\S}{{\mathcal{S}}}
  \newcommand{\U}{{\mathcal{U}}}
  \newcommand{\V}{{\mathcal{V}}}
  \newcommand{\X}{{\mathcal{X}}}
  \newcommand{\Z}{{\mathcal{Z}}}
\newcommand{\fA}{{\mathfrak{A}}}
\newcommand{\fF}{{\mathfrak{F}}}
\newcommand{\fM}{{\mathfrak{M}}}
\newcommand{\fs}{{\mathfrak{s}}}
\newcommand{\ft}{{\mathfrak{t}}}
\newcommand{\fu}{{\mathfrak{u}}}
\newcommand{\Bm}{{\mathbf{m}}}
\newcommand{\rC}{{\mathrm{C}}}
\newcommand{\ep}{\varepsilon}
\renewcommand{\phi}{\varphi}
\newcommand{\upchi}{{\raise.35ex\hbox{\ensuremath{\chi}}}}
\newcommand{\qforal}{\quad\text{for all}\quad}
\newcommand{\Ad}{\operatorname{Ad}}
\newcommand{\id}{{\operatorname{id}}}
\newcommand{\ran}{\operatorname{Ran}}
\newcommand{\spn}{\operatorname{span}}
\newcommand{\rank}{\operatorname{rank}}
\newcommand{\cl}{\operatorname{cl}}
\newcommand{\ca}{\mathrm{C}^*}
\newcommand{\Fn}{\mathbb{F}_n^+}
\newcommand{\Fth}{\mathbb{F}_\theta^+}
\newcommand{\mt}{\varnothing}
\newcommand{\ol}{\overline}
\newcommand{\gr}{\bF_\theta^+}
\begin{document}
\title[Factoriality and Type Classification]
{Factoriality and type classification of $\textsf{k}$-graph von Neumann algebras}
\author[D. Yang]
{Dilian Yang}
\address{Dilian Yang,
Department of Mathematics $\&$ Statistics, University of Windsor, Windsor, ON
N9B 3P4, CANADA} \email{dyang@uwindsor.ca}

\begin{abstract}
Let $\Fth$ be a single vertex \textsf{k}-graph,
and $\pi_\omega(\O_\theta)''$ be the von Neumann algebra induced from the GNS representation of a distinguished
state $\omega$ of its $\textsf{k}$-graph C*-algebra $\O_\theta$.
In this paper, we prove the factoriality of $\pi_\omega(\O_\theta)''$ and further determine its type, 
when either $\Fth$ has the little pull-back property, or the intrinsic group of $\Fth$ has rank $0$.
 The key step to achieve this is to show that the fixed point algebra of the modular action 
corresponding to $\omega$ has a unique tracial state. 
\end{abstract}

\subjclass[2010]{46L36, 47L65, 46L10, 46L05.}
\keywords{\textsf{k}-graph, \textsf{k}-graph C*-algebra, \textsf{k}-graph von Neumann algebra, factoriality, type classification, intrinsic group}
\thanks{The research was supported in part by an NSERC Discovery grant.}

\date{}
\maketitle

\section{Introduction}

Since the work \cite{KumPask} of Kumjian and Pask, higher rank graph (or $\textsf{k}$-graph) algebras have been extensively studied. 
See, for example, \cite{FowSim, HRSW13, HLRS13, KumPask, KPS11, KPS12, PRRS06, Raeburn, RaeSimYee1,RaeSimYee2} to mention but a few and the references therein
for self-adjoint algebras, and \cite{DPY1, DYrepk, DPYdiln, FulYan13, KriPow06, P1} for non-self-adjoint algebras. Being a special case, 
single vertex $\textsf{k}$-graph algebras provide a very important and intriguing class \cite{DYrepk}. 
In particular, single vertex 2-graph algebras are systematically studied in \cite{DPY1, DPYdiln, DYperiod, FulYan13, Yang1, Yang2}.  

Roughly speaking, a single vertex \textsf{k}-graph is a unital semigroup $\Fth$ having \textsf{k} types of generators. 
The generators from the same type form a unital free semigroup, while the generators from distinct types satisfy a family $\theta$ of permutations. 
It is often useful to think of a \textsf{k}-graph as a \textsf{k}-coloured directed graph, whose type $i$-generators are edges with $i$-th colour ($1\le i\le \textsf{k}$). 
The graph C*-algebra of $\Fth$ is denoted by $\O_\theta$, which is the universal C*-algebra 
for the Cuntz-type representations of $\Fth$. 
The universal property of $\O_\theta$ yields a family of gauge automorphisms. Integrating those
automorphisms over the \textsf{k}-torus $\bT^\textsf{k}$ gives a faithful expectation $\Phi_ 0$ of $\O_\theta$
onto the core $\fF$ (i.e. the fixed point algebra of the gauge action) of $\O_\theta$. It is known that $\fF$ is a UHF algebra.
Particularly, $\fF$ has a unique tracial state, denoted by $\tau$. Then $\tau$
and $\Phi_ 0$ induce a distinguished
faithful state $\omega$ of $\O_\theta$, namely, $\omega:=\tau\circ \Phi_ 0$. Let $\pi_\omega(\O_\theta)''$
denote the von Neumann algebra induced from the GNS representation of $\omega$. 
Slightly abusing the terminology, we will simply call 
it the \textsf{k}-graph von Neumann algebra associated to $\Fth$ (induced from $\omega$). 
As in \cite{Yang1} for $\textsf{k}=2$, one can obtain an explicit formula for the modular automorphism $\sigma_t$ ($t\in \bR$) of $\pi_\omega(\O_\theta)''$ corresponding to $\omega$.
Identifying $\O_\theta$ with $\pi_\omega(\O_\theta)$ yields a C*-dynamical system $(\O_\theta, \bR, \sigma)$.

In \cite{Yang1, Yang2}, we initiate the study of the factoriality and type of $\pi_\omega(\O_\theta)''$ for a single vertex $2$-graph $\Fth$. 
Let $m$ and $n$ be the numbers of the blue and red edges of $\Fth$, respectively. 
It is shown in \cite{Yang1} that if $\frac{\ln m}{\ln n}\not\in\bQ$,
then $\pi_\omega(\O_\theta)''$ is a type III$_1$ (AFD) factor. 
In order to obtain the factoriality of $\pi_\omega(\O_\theta)''$ when $\frac{\ln m}{\ln n}\in\bQ$ for (aperiodic) 2-graph $\Fth$,
we carefully study the structure of the fixed point algebra $\O_\theta^\sigma$ of the modular action $\sigma$. 
It is proved in \cite{Yang2} that $\O_\theta^\sigma$ is a crossed product of the core $\fF$ by $\bZ$. Under the assumption that 
{$\O_\theta^\sigma$ has a unique tracial state, we are able to show that $\pi_\omega(\O_\theta)''$ 
is a type III$_\lambda$ factor, where $0<\lambda<1$ is completely determined by $m$ and $n$. 
It is asked in \cite{Yang2} if this assumption that $\O_\theta^\sigma$ has a unique tracial state is redundant. 

In this paper, 
we answer this affirmatively when $\Fth$ has the little pull-back property (see  Section \ref{SS:kgraph} below for its definition). 
Actually we are able to obtain much more.
Before going further, let us introduce an important group $G$ associated to $\Fth$:
\[
G=\big\{g=(g_1,\ldots, g_\textsf{k})\in\bZ^\textsf{k}\mid  \prod_{i=1}^\textsf{k}\, m_i^{g_i}=1\big\},
\]
where $m_i$ is the number of $i$-th coloured edges of $\Fth$ ($1\le i\le \textsf{k}$).
$G$ is called the \textit{intrinsic group} of $\Fth$. 
In general, $0\le \rank G\le \textsf{k}-1$.
In this paper, we prove that  $\pi_\omega(\O_\theta)''$ is a factor and further determine its type, 
when either $\Fth$ has the little pull-back property, or the intrinsic group of $\Fth$ has rank $0$.
To this end, we carefully study the structure of the fixed point algebra $\O_\theta^\sigma$,
and show that $\O_\theta^\sigma$ has the property of possessing a unique tracial state. 

The remainder of this paper is organized as follows. In Section \ref{S:pre}, some backgrounds needed later are briefly given. 
We first investigate the modular theory of \textsf{k}-graph algebras in Section \ref{S:stru}. Since it is similar to the case of $\textsf{k}=2$ studied in \cite{Yang1}, we 
sketch it here and only give somehow more unified proofs when necessary. 
Then we study the structure of the fixed point algebra $\O_\theta^\sigma$. 
It is proved that $\O_\theta^\sigma$ is a C*-algebra generated by the standard generators of $\O_\theta$ with degrees in $G$.
When $\rank G=1$, it is a crossed product of  $\fF$ by $\bZ$. 
In Section \ref{S:Dix}, we show that $\O_\theta^\sigma$ has the Dixmier property if $\Fth$ has the little pull-back property, and so it has a unique tracial state.
This is obtained as a consequence of the fact that $\O_\theta$ enjoys a property stronger than the Dixmier property. 
To this end, it needs some careful analysis about a class of averaging operators in $\O_\theta$. Those averaging operators are induced from
unitaries in the algebraic part of the core $\fF$. 
The idea of this part is strongly influenced by the work \cite{Arc80} of Archbold.
Our mains theorems are stated and proved in Section \ref{S:char}.

We should mention that after the first version of this paper was circulated in 2013, very recently the main results of this paper have been generalized in 
\cite{LLNSW15} by using completely different approaches. 

\smallskip
Let us finish off this introduction by fixing our notation and conventions. 

\smallskip
\noindent
\textbf{Notation and Conventions.}
There is a natural coordinate-wise partial ordering on $\bZ^\textsf{k}$: 
$
p,  q\in\bZ^\textsf{k} \text{ with } p\le  q \Longleftrightarrow p_i\le q_i \text{ for all } 1\le i\le \textsf{k}. 
$
For $ p,  q\in\bZ^\textsf{k}$, $p\vee  q$ and $ p\wedge  q$ denote the coordinate-wise maximum and minimum of $ p$ and $ q$, respectively.  
Let $ p_+= p\vee 0$, and $ p_-=(- p)\vee  0$. 
Set $\bN^\textsf{k}=\{ p\in\bZ^\textsf{k}:  p\ge  0\}$. 
Then $p= p_+- p_-$ with $ p_+, p_-$ in $\bN^\textsf{k}$ and  
$ p_+\wedge p_-= 0$. 

For a nonzero $n\in\bN$ 
we use the following notation: 
\begin{align*}
\ol{n}=(n,\ldots, n)\in\bN^\textsf{k},\quad 
 \bn=\{1,\ldots, n\}.
\end{align*}

In this paper, every C*-algebra $\A$ is assumed to be unital. The group of its unitaries is denoted by $\U(\A)$, and its centre is written as $\Z(\A)$.  
By an endomorphism of $\A$, we always mean a unital, *-endomorphism. 

Let $\ep_i$ ($1 \le i \le \textsf{k}$) be the standard generators for $\bZ^\textsf{k}$.
From now on, we always assume that 
 \textit{$\Fth$ is a fixed single vertex $\textsf{k}$-graph with $m_i$ $i$-th coloured generators (i.e., of degree $\epsilon_i$) ($1\le i\le \textsf{k})$.}

If $\Fth$ is periodic (equivalently, $\O_\theta$ is not simple), then it is known that the centre of $\O_\theta$ is isomorphic to $\rC(\bT^s)$ for some $s\ge 1$
\cite{DYrepk, DYperiod}. So $\pi_\omega(\O_\theta)''$ is not a factor in this case.
Hence, throughout the rest of this paper, we always assume that $\Fth$ is aperiodic, unless otherwise specified. 
In particular, this implies that  $m_i>1$ for all $1\le i\le \textsf{k}$ \cite{DYrepk}.  
Set 
\[
m:=(m_1, \ldots, m_\textsf{k}),
\]
and we use the multi-index notation: for $p\in\bZ^\textsf{k}$, 
$
m^{ p}:=\prod_{i=1}^\textsf{k} m_i^{p_i}.
$

Finally, if $p\in\bN^\textsf{k}$, the set of words in $\Fth$ of degree $p$ is denoted by 
$
\Lambda^{ p}=\{w\in\Fth\mid d(w)= p\}. 
$


\section{Preliminaries}
\label{S:pre}

The main sources of this section are \cite{Arc80, DPY1, DYrepk, Exe08, KumPask, Raeburn, Yang1}. 

\subsection{$\textsf{k}$-graphs}
\label{SS:kgraph}

Kumjian and Pask \cite{KumPask} define a $\textsf{k}$-graph as a small category
$\Lambda$ with a degree map $d : \Lambda\to\bN^\textsf{k}$ satisfying the
\textit{factorization property}: for every $\lambda\in\Lambda$
and $p,q\in\bN^{\textsf{k}}$ with $d (\lambda)=p+q$, there are unique
elements $\mu,\nu\in\Lambda$ such that
$\lambda=\mu\nu$ with $d(\mu)=p$ and $d(\nu)=q$.

Recall that a $\textsf{k}$-graph $\Lambda$ is \textit{finitely aligned} if for every $\mu, \nu\in\Lambda$ one has
\[
\Lambda^{\text{min}}(\mu,\nu):=\{(\xi,\eta)\in\Lambda\times\Lambda:\mu\xi=\nu\eta\text{ and }d(\mu\xi)=d(\mu)\vee d(\nu)\}
\]
is finite. 

\begin{defn}
We say that $\Lambda$ is \textit{singly aligned}, if $|\Lambda^{\text{min}}(\mu,\nu)|\le 1$ for every $(\mu,\nu)\in\Lambda\times\Lambda$.
\end{defn}

An obvious weaker notion is the following.

\begin{defn}
A \textsf{k}-graph $\Lambda$ is said to have the \textit{little pull-back property} 
if $|\Lambda^{\text{min}}(e,f)|\le 1$, whenever $e,f$ are edges of $\Lambda$ with distinct degrees.
\end{defn}

To my best knowledge, the little pull-back property was first introduced by Exel in \cite{Exe08}  
in order to apply his tight representation theory
to $\textsf{k}$-graphs.
This property has nice and clear geometrical meaning in the commuting-square representation. 
It turns out that that the little pull-back property and single alignment are equivalent (\cite{Exe08}).  

\medskip
In this paper, we focus on single vertex $\textsf{k}$-graphs. 
Their definition can be equivalently stated as follows.
\begin{defn}
\label{D:kgraph}
A single vertex \textsf{k}-graph $\Fth$ is a unital semigroup generated by $\{e_\fs^i:\fs\in \Bm_i,\, 1\le i\le \textsf{k}\}$, which satisfies the following properties:
\begin{itemize}
\item[(\textsf{fsg}).] $\{e_\fs^i:\fs\in \Bm_i\}$ generates a unital free semigroup $\bF_{m_i}^+$;
\item[(\textsf{$\theta$-CR}).] There is a family of permutations $\theta=\{\theta_{ij}: \theta_{ij}\in S_{ \bm_i \times  \bm_j} \text{ for } 1\le i<j\le \textsf{k}\}$, such that 
the following $\theta$-commutation relations hold
\begin{align*}
 e^i_\fs e^j_\ft = e^j_{\ft'} e^i_{\fs'}
 \quad\text{where}\quad
 \theta_{ij}(\fs,\ft) = (\fs',\ft').
\end{align*}
\item[(\textsf{Cubic}).] Every three sets of generators $\{e_{t_1}^i, e_{t_2}^j, e_{t_3}^k\}$ satisfy the following
cubic condition:
\begin{align*}
e^i_{\ft_1} e^j_{\ft_2} e^k_{\ft_3}
&= e^i_{\ft_1} e^k_{\ft_3'} e^j_{\ft_2'}
= e^k_{\ft_3''} e^i_{\ft_1'} e^j_{\ft_2'}
= e^k_{\ft_3''} e^j_{\ft_2''} e^i_{\ft_1''} \\
&= e^j_{\ft_{2'}} e^i_{\ft_{1'}} e^k_{\ft_3}
= e^j_{\ft_{2'}} e^k_{\ft_{3'}} e^i_{\ft_{1''}}
= e^k_{\ft_{3''}} e^j_{\ft_{2''}} e^i_{\ft_{1''}}
\end{align*}
\[\implies \quad
e^i_{\ft_1''}=e^i_{\ft_{1''}},\
e^j_{\ft_2''}=e^j_{\ft_{2''}},\
e^k_{\ft_3''}=e^k_{\ft_{3''}}.
\]
\end{itemize}
\end{defn}

By the above definition, for $\textsf{k}=2$, every permutation completely determines a 2-graph. 
But for $\textsf{k}\ge 3$, not every family $\theta$ of permutations yields a $\textsf{k}$-graph.
The above cubic condition requires, very roughly speaking,  that a word of degree $(1,1,1)$ should be well defined.
Thus $\Fth$ is a $\textsf{k}$-graph if and only if the restriction of $\Fth$
to every triple family of edges $\{ e^i_\fs,\ e^j_\ft,\ e^k_\fu \}$ is a 3-graph.

Return to the little pull-back property. 
One can see that $\Fth$ has the little pull-back property if and only if so is every 
$2$-graph $\{ e^i_\fs,\ e^j_\ft\}$. By \cite{DYrepk}, if $\Fth$ has the little pull-back property, then it has to be aperiodic; 
however, the converse does not hold in general. 
The little pull-back property seems to be restrictive. But it is actually not very so. It is known in \cite{P1} that there are essentially only nine single
vertex 2-graphs with $2$ blue and $2$ red edges. One can easily check that all aperiodic graphs have the little 
pull-back property (and so are singly aligned) except only one(!)
given by $\theta=((1,1),(2,1),(1,2))$.

\subsection{\textsf{k}-graph algebras}
Recall from \cite{FulYan13} that an $n$-tuple $V=(V_1,...,V_n)$ of operators in a C*-algebra $\A$ is said to be of \textit{Cuntz-type} if 
\[
V_i^*V_j=\delta_{i,j} \quad\text{and}\quad \sum_{i=1}^nV_iV_i^*=I.
\]
Namely, $V_i$'s are orthogonal isometries, and satisfy the defect free property in the terminology of \cite{DPY1, DPYdiln, DYperiod, DYrepk}.

Let $\gr$ be a $\textsf{k}$-graph, and $E^i=(E^i_1, \ldots, E^i_{m_i})$ $(1\le i\le \textsf{k}$). 
We call $[E^1, \ldots, E^\textsf{k}]$ a \textit{representation of $\Fth$} if it satisfies ($\theta$-CR):
\[
 E^i_\fs  E^j_\ft= E^j_{\ft'} E^i_{\fs'} \quad \text{when}\quad \theta_{ij}(\fs,\ft)=(\fs',\ft')
\]
for all $1\le i< j\le \textsf{k}$, $\fs\in  \Bm_i$, $\ft\in  \Bm_j$. 
This representation is simply denoted as $\textsf{E}$. $\textsf{E}$ is said to be of \textit{Cuntz-type}, if every $ E^i$ is of Cuntz-type ($1\le i\le \textsf{k}$).

\begin{defn}
The $\textsf{k}$-graph C*-algebra of $\Fth$, denoted as $\O_\theta$, is the universal C*-algebra for Cuntz-type representations of $\Fth$.
\end{defn}

So $\O_\theta$ is the C*-algebra generated by $\Fth$ with the universal property that every Cuntz-type representation of $\Fth$ extends uniquely 
a Cuntz-type representation of $\O_\theta$. 
Also, we reserve the notation $s_{e^j_\ft}$, simply written as $s^j_\ft$ ($1\le j\le \textsf{k},\, \ft\in \Bm_j$), for the generators of 
$\O_\theta$. 

For $w$ in $\Fth$, by $\theta$-commutation relations ($\theta$-CR), one can always write $w=u_1\cdots u_\textsf{k}$ with $u_i$ being a word of $e^i$'s. 
The degree of $w$ is defined by 
$d(w)=(|u_1|, \ldots, |u_\textsf{k}|)$, 
where $|u_i|$ is the length of $u_i$. 
For $w=e^{j_1}_{\ft_1}\cdots e^{j_n}_{\ft_n}\in\Fth$, we use the multi-index notation
\[
s_w=s_{e^{j_1}_{\ft_1}}\cdots s_{e^{j_n}_{\ft_n}}=s^{j_1}_{\ft_1}\cdots s^{j_n}_{\ft_n}.
\]
The degree map $d$ of $\Fth$ can be extended as follows.
By the universal property of $\O_{\theta}$, there is a family of
\textit{gauge automorphisms}
$\gamma_{\bt}$ for $\bt \in \bT^\textsf{k}$
given by
\[
 \gamma_{\bt}(s_w) =\bt^{d(w)} s_w\qforal w\in\Fth.
\]
For each $ n\in\bZ^\textsf{k}$, define a mapping $\Phi_ n$ on $\O_{\theta}$ via
\[
\Phi_ n(A)=\int_{\bT^\textsf{k}}\bt^{- n}\gamma_\bt(A) d\bt \quad \mbox{for all}\quad A\in\O_{\theta}.
\]
So $\ran\Phi_n$ is a spectral subspace of the gauge action $\gamma$. 
If $A\in\ran\Phi_ n$, we say that the \textit{degree} of $A$ is $ n$, and write $d(A)= n$. So in particular $d(s_us_v^*)=d(u)-d(v)$ for $u, v\in\Fth$.

It turns out that $\Phi_ 0$ is of particular importance to us. It is a faithful expectation onto the core $\O_{\theta}^\gamma$.
Furthermore, $\O_\theta^\gamma$ 
is a $\prod_{i=1}^\textsf{k}m_i^\infty$-UHF algebra:
\[
\fF:=\ran\Phi_ 0=\O_{\theta}^\gamma=\ol{\bigcup_{n\ge 1} \fF_n},
\]
where
\[
\fF_n=\spn\{ s_us_v^* : d(u)=d(v)=\ol n\}
\]
is the full matrix algebra $M_{\prod_{i=1}^\textsf{k}m_i^n}$.
(See \cite{Yang1} for the explanation why one can only consider such special elements $\ol{n}$ in $\bN^\textsf{k}$.) 
Let $\tau$ be the (unique) tracial state of $\fF$.
With $\Phi_ 0$, $\tau$ induces a distinguished faithful state $\omega$ on $\O_\theta$ by
$$
\omega(A)=\tau(\Phi_ 0(A))\quad \text{for all}\quad A\in\O_\theta.
$$
Let $\pi_{\omega}(\O_\theta)''$ be the von Neumann algebra generated by the GNS representation of
$\omega$. When no confusion is caused, we shall omit the subscript $\omega$ and write $\pi(\O_\theta)''$ for short. 
Abusing the terminology, we give the following definition. 

\begin{defn}
The von Neumann algebra $\pi(\O_\theta)''$ is called the \textit{$\textsf{k}$-graph von Neumann algebra} associated to $\Fth$ (induced from $\omega$). 
\end{defn}

Also, since $\pi$ is faithful, we will often identify $\O_\theta$ and $\pi(\O_\theta)$. 
Refer to \cite{DYrepk, Yang1} and the references therein for more details about this subsection.

\subsection{Endomorphisms of C*-algebras arising from Cuntz-type tuples}
Let $\A$ be a C*-algebra. 
Let $V=(V_1,\ldots, V_n)$ be an arbitrary Cuntz-type tuple in $\A$ and $\ca(V)$ be the C*-algebra generated by $V_i$'s. 
For $p\in\bN$, one has a Cuntz-type $n^p$-tuple:
$V_p=(V_w: w\in \Fn,  |w|=p).$ 
As in \cite{Yang1},  we define 
\[
 \gamma_p(A)=\sum_{|w|=p}  V_w A  V_w^*\qforal A\in \ca(V).
\]
Then $ \gamma_p$ is an endomorphism of $\ca(V)$. Clearly, $ \gamma_p$ can be extended to $\A$, which is also denoted by $ \gamma_p$, 
sometimes more precisely by $ \gamma_p^V$ in order to emphasize the tuple $V$ which gives rise to it. 

Return to the $\textsf{k}$-graph C*-algebra $\O_\theta$. 
Let $ \textsf{E}=[ E^1,...,  E^\textsf{k}]$ be a Cuntz-type representation of $\O_\theta$. One obtains an endomorphism $ \gamma_ p$
 on $\ca(\textsf{E})$ for $ p\in\bN^\textsf{k}$. If $E^i$'s are from $\O_\theta$, then we have an endomorphism $ \gamma^\textsf{E}_ p$ on $\O_\theta$:
\begin{align}
\label{E:genGam}
 \gamma^{ \textsf{E}}_ p(A)=\sum_{w\in\Lambda^p}  \textsf{E}_w A  \textsf{E}_w^*\qforal A\in\O_\theta.
\end{align}
Here the multi-index notation $\textsf{E}_w$ is similar to $s_w$ as above.

\subsection{The Dixmier property}
\label{SS:DP}

Let $\A$ be a C*-algebra and $\V$ be a subgroup of the unitary group $\U(\A)$ of $\A$. For given $U_1,...,U_n\in\V$ and positive numbers 
$\lambda_1,...,\lambda_n$ with $\sum_{i=1}^n\lambda_i=1$, we define an averaging operator $\alpha$ on $\A$ via
\[
 \alpha(A)=\sum_{i=1}^n\lambda_i U_i A U_i^*\qforal A\in \A.
\]
We let $\textsf{Ave}(\A, \V)$ denote the set of all such averaging operators. It is easy to see that $\textsf{Ave}(\A,\V)$ is a unital semigroup with composition as 
the semigroup multiplication, and that every $\alpha\in\textsf{Ave}(\A,\V)$ is a unital positive map. In particular, every averaging operator is contractive and self-adjoint.

\begin{defn}
\label{D:Dix}
A C*-algebra $\A$ is said to have the Dixmier property if, for every $A\in\A$, 
\[
\ol{\{\alpha(A):\alpha\in \textsf{Ave}(\A,\U(\A))\}}^{\|\cdot \|}\cap \Z(\A)\ne \mt. 
\] 

We say that $\A$ satisfies the \text{weak Dixmier property} if, for every $A\in\A^+\setminus\{0\}$,
\[
\ol{\spn\{\alpha(A):\alpha\in \textsf{Ave}(\A,\U(\A))\}}^{\|\cdot \|}\supseteq \bC I. 
\]
\end{defn}

The Dixmier property has been extensively studied. See, for example, \cite{Arc77, Arc79, Arc80, HaaZsi84, Rie82}, amongst others. 
It is well-known that a simple C*-algebra with the Dixmier property has at most one tracial state. 
Haagerup and Zsid\'o \cite{HaaZsi84} showed the converse for simple C*-algebras:
every simple C*-algebra with at most one tracial state has the Dixmier property.   
The weak Dixmier property is introduced in \cite{Rie82} in order to partially answer a question posed in \cite{Arc79}. 
Clearly, the weak Dixmier property is weaker than the Dixmier property. 
It is proved in \cite{Rie82} that $\A$ satisfies the weak Dixmier property if and only if $\A$ is simple and has a most one tracial state. 
So it is now apparent that the Dixmier property and the weak Dixmier property are equivalent for simple C*-algebras. 
One advantage of the weak Dixmier property is that it implies simplicity.

\section{The modular theory of $\O_\theta$ and Structure of $\O_\theta^\sigma$}
\label{S:stru}

In this section, we first briefly discuss the modular theory of \textsf{k}-graph C*-algebras. This is very similar to \cite{Yang1} for $\textsf{k}=2$. 
The associated modular objects are explicitly given.
They will be useful in calculating the Connes spectrum of the modular operator. In the second part of this section, 
we investigate the structure of $\O_\theta^\sigma$, the fixed point algebra of the modular action corresponding to the 
distinguished state $\omega$. Its structure plays a critical role for what follows. 

\subsection{The modular theory of $\O_\theta$}

Let ${{}^\circ\O_{\theta}}$ denote the \textit{algebraic} algebra generated by $s_u$'s.
So ${}^\circ{\O_\theta}=\spn\{ s_u s_v^*:u,v\in\Fth\}$.
In the sequel, for $ n\in\bZ^\textsf{k}$, we use $\X_ n$ to denote the intersection of $\ran{\Phi_ n}$ and ${}^\circ\O_{\theta}$: 
\[
\X_ n=\spn\{ s_u s_v^*:u,v\in\Fth, \, d(u)-d(v)= n\}.
\]
So every element of $\X_n$ has degree $n$. 

Recall from Section \ref{S:pre} that the distinguished faithful state $\omega$ is given by $\omega=\tau\circ\Phi_ 0$,
where $\tau$ is the tracial state on $\fF$ and $\Phi_ 0$ is the expectation of $\O_\theta$ onto the core $\fF$ of $\O_\theta$.
Let $L^2(\O_\theta)$ be the GNS Hilbert space determined by $\omega$. So the inner
product on $L^2(\O_\theta)$ is given by $\langle A\mid B\rangle=\omega(A^*B)$ for all $A,B\in L^2(\O_\theta)$. For $A\in\O_\theta$, we denote
the left action of $A$ by $\pi_\omega(A)$, that is, $\pi_\omega(A)B=AB$ for all $B$ in $\O_\theta$.

We now begin to give the modular objects in the celebrated Tomita-Takesaki modular theory.  
Define an anti-linear operator $S_\circ$ on ${}^\circ{\O_\theta}\subset L^2(\O_\theta)$ by
$$
S_\circ(A)=A^*\quad \mbox{for all}\quad A\in {}^\circ{\O_\theta},
$$
and another one $F_\circ$ on ${}^\circ{\O_\theta}$, which acts on generators by
$$
F_\circ( s_u s_v^*)
= m^{d(u)-d(v)} s_v s_u^*,
$$
and then extend it anti-linearly.

As in \cite{Yang1}, we shall show that $F_\circ$ is indeed the adjoint of $S_\circ$. The following proof is essentially the same as that of \cite[Lemma 5.2]{Yang1}, 
but in a slightly more unified way (even in the case of $\textsf{k}=2$).  
A lemma first, whose proof can be easily modified from that of \cite[Lemma 5.1]{Yang1} and so we omit it. 

\begin{lem}\label{L:tronF}
Suppose $u,v\in\Fth$ with $d(u)=d(v)$. Then
$$
\tau(s_u A s_v^*)
=\delta_{u,v}\, m^{-d(u)}\tau(A)
\quad \text{for all}\quad A\in\fF.
$$
\end{lem}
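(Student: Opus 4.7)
The plan is to use the explicit formula for $\tau$ on the matrix subalgebras $\fF_n$, combined with the Cuntz relations to refine a general product $s_u A s_v^*$ into a sum of matrix units living inside a single $\fF_m$. By norm continuity of $\tau$ and density of $\bigcup_n \fF_n$ in $\fF$, it suffices to verify the identity when $A = s_\mu s_\nu^*$ with $d(\mu) = d(\nu) = \ol n$ is one of the standard matrix units spanning $\fF_n$.

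For such an $A$ the semigroup structure of $\Fth$ gives $s_u A s_v^* = s_{u\mu}\, s_{v\nu}^*$, and both $u\mu$ and $v\nu$ have degree $p + \ol n$ where $p := d(u) = d(v)$. Pick $m$ large enough that $\ol m \ge p + \ol n$, set $r := \ol m - (p+\ol n) \in \bN^\textsf{k}$, and insert the resolution of identity $I = \sum_{\beta \in \Lambda^r} s_\beta s_\beta^*$ to rewrite
\[
s_u A s_v^* \;=\; \sum_{\beta \in \Lambda^r} s_{u\mu\beta}\, s_{v\nu\beta}^*,
\]
which is now a sum of matrix units in $\fF_m \cong M_{m^{\ol m}}$. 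Since $\tau$ restricts to the normalised trace on $\fF_m$, one has $\tau(s_\xi s_\eta^*) = \delta_{\xi,\eta}\, m^{-\ol m}$ whenever $d(\xi) = d(\eta) = \ol m$, and the factorization property forces $u\mu\beta = v\nu\beta$ to hold if and only if $u = v$ and $\mu = \nu$ (factoring at degree $p$, then at degree $\ol n$, each step being unique). Using $|\Lambda^r| = m^r$ we then obtain
\[
\tau(s_u A s_v^*) \;=\; \delta_{u,v}\,\delta_{\mu,\nu}\, m^{r-\ol m} \;=\; \delta_{u,v}\,\delta_{\mu,\nu}\, m^{-p-\ol n},
\]
and since $\tau(A) = \delta_{\mu,\nu}\, m^{-\ol n}$ this is precisely $\delta_{u,v}\, m^{-d(u)}\tau(A)$, as required.

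I do not expect any genuine difficulty: the entire argument is bookkeeping with multi-index exponents, choosing $m$ large enough so that the refinement lands inside one matrix block, and invoking the factorization property at the right moment to collapse the Kronecker delta. The computation is essentially the multi-index upgrade of \cite[Lemma 5.1]{Yang1}, with the scalars $m^{-\ol m}$ and $|\Lambda^r| = m^r$ playing the roles of their two-index counterparts, which is presumably why the authors feel comfortable omitting the details.
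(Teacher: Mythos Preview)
Your argument is correct and is precisely the multi-index upgrade of \cite[Lemma 5.1]{Yang1} that the paper invokes (the paper omits the proof entirely, deferring to that reference). One cosmetic remark: you use the symbol $m$ both for the large integer with $\ol m\ge p+\ol n$ and for the vector $(m_1,\dots,m_{\textsf{k}})$, so expressions like $m^{\ol m}$ or $\fF_m$ read ambiguously; choosing a different letter (say $N$) for the integer would clean this up without changing anything substantive.
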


\begin{lem}
\label{L:FadjS}
Let $S_\circ$ and $F_\circ$ be defined as above.
Then $F_\circ=S_\circ^*$: $\langle S_\circ(A)\mid B\rangle=\langle F_\circ(B)\mid A\rangle$
for all $A,B$ in ${}^\circ{\O_\theta}$.
\end{lem}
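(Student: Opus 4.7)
The plan is to verify the identity on the total set of standard monomials $A = s_u s_v^*$, $B = s_x s_y^*$ with $u,v,x,y\in\Fth$, and then conclude by anti-linearity and density of ${}^\circ\O_\theta$ in $L^2(\O_\theta)$. Unwinding the inner product $\langle \xi \mid \eta\rangle = \omega(\xi^*\eta)$ together with the definition of $F_\circ$, the required equality $\langle S_\circ A \mid B\rangle = \langle F_\circ B \mid A\rangle$ is equivalent to the KMS-type relation
\[
 \omega(s_u s_v^*\, s_x s_y^*) \;=\; m^{d(x)-d(y)}\,\omega(s_x s_y^*\, s_u s_v^*).
\]

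A first reduction uses the gauge grading: since $\Phi_0$ annihilates every spectral subspace of non-zero degree, both sides automatically vanish unless $d(u)+d(x) = d(v)+d(y)$, and I henceforth restrict to this degree-balanced case.

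Under this assumption I expand the middle products $s_v^* s_x$ (and $s_y^* s_u$) using the $\theta$-commutation relations and the orthogonality of same-coloured generators. This produces sums over $\Lambda^{\text{min}}(v,x)$ (respectively $\Lambda^{\text{min}}(y,u)$) of terms $s_\xi s_\eta^*$, so that $s_u s_v^* s_x s_y^* = \sum s_{u\xi} s_{y\eta}^*$ and similarly on the other side. Lemma~\ref{L:tronF} then evaluates each summand on the core to $\delta_{u\xi,\,y\eta}\, m^{-d(u\xi)}$ (respectively $\delta_{x\xi',\,v\eta'}\, m^{-d(x\xi')}$).

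The main obstacle is matching the two sides term by term: a pair $(\xi,\eta)\in\Lambda^{\text{min}}(v,x)$ with $u\xi = y\eta$ should correspond, via the uniqueness in the factorization property of $\Fth$, to a pair $(\xi',\eta')\in\Lambda^{\text{min}}(y,u)$ with $x\xi' = v\eta'$; and under the standing degree balance the two resulting powers of $m$ must differ by exactly the prefactor $m^{d(x)-d(y)}$. Carrying out this combinatorial book-keeping---essentially the case-$\textsf{k}=2$ computation of \cite[Lemma~5.2]{Yang1} rewritten uniformly in $\textsf{k}$---delivers the required identity, and anti-linear extension from these monomials yields $F_\circ = S_\circ^*$.
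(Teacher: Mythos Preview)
Your approach is correct in outline but takes a more laborious route than the paper. You expand $s_v^*s_x$ and $s_y^*s_u$ through $\Lambda^{\text{min}}$, and then have to establish the bijection between contributing pairs. That bijection is indeed just $(\xi,\eta)\mapsto(\eta,\xi)$: if $(\xi,\eta)\in\Lambda^{\text{min}}(v,x)$ and $u\xi=y\eta$, then a coordinate-wise check using the degree balance $d(u)+d(x)=d(v)+d(y)$ gives $d(y\eta)=d(y)\vee d(u)$, so $(\eta,\xi)\in\Lambda^{\text{min}}(y,u)$ and automatically $x\eta=v\xi$. (This is a degree computation, not really ``uniqueness in the factorization property'' as you say.) The $m$-power check then goes through. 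So the argument can be completed, but you have left the genuinely non-trivial minimality verification unwritten.

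The paper avoids all of this by a normalisation trick: writing $n=d(A)$, one has canonically $A=s_{u_1}A's_{v_1}^*$ and $B=s_{v_2}B's_{u_2}^*$ with $d(u_1)=d(u_2)=n_+$, $d(v_1)=d(v_2)=n_-$, and $A',B'\in\fF$. Now the inner products $s_{v_1}^*s_{v_2}$ and $s_{u_2}^*s_{u_1}$ are between isometries of \emph{equal} degree, hence collapse to Kronecker deltas $\delta_{v_1,v_2}$ and $\delta_{u_1,u_2}$; no $\Lambda^{\text{min}}$ ever appears, and Lemma~\ref{L:tronF} plus the trace property of $\tau$ on $\fF$ finish the computation in two lines. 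The paper's decomposition buys a substantially shorter and more transparent proof; your direct expansion buys nothing extra here, though it would be the natural route if one did not spot the $n_\pm$ reduction.
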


\begin{proof}
It suffices to prove this lemma for standard generators $A, B$ in ${}^\circ{\O_\theta}$. By the definitions of $S_\circ$ and $F_\circ$, we have 
\[
\langle S_\circ(A)\mid B\rangle=\omega(AB), \quad \langle F_\circ(B)\mid A\rangle= m^{d(B)}\omega(BA).
\]

If $d(A)+d(B)\ne  0$, then neither $AB$ nor $BA$ belongs to $\fF$. So $\langle S_\circ(A)\mid B\rangle=\langle F_\circ(B)\mid A\rangle=0$. 

We now assume that $d(A)+d(B)= 0$.
Assume $d(A)=n\in \bZ^\textsf{k}$. 
Then, by the $\theta$-commutation relations, there are $u_1,v_1,u_2,v_2\in\Fth$ with 
$d(u_1)=d(u_2)= n_+$ and $d(v_1)=d(v_2)= n_-$, 
and $A',B'$ in $\fF$ such that
\[
A= s_{u_1}A'  s_{v_1}^*,\quad B= s_{v_2}B'  s_{u_2}^*.
\]  
Then by Lemma \ref{L:tronF} we have 
\begin{align*}
\omega(AB)
&=\omega( s_{u_1}A'  s_{v_1}^* s_{v_2}B'  s_{u_2}^*)=\delta_{v_1,\, v_2} \omega( s_{u_1}A'B'  s_{u_2}^*)\\
&=\delta_{v_1, v_2}\delta_{u_1,u_2}\,  m^{d(u_1)}\omega(A' B'),\\
\omega(BA)&=\omega( s_{v_2}B'  s_{u_2}^* s_{u_1}A'  s_{v_1}^*)=\delta_{u_1,\, u_2} \omega( s_{v_2}B'A'  s_{v_1}^*)\\
&=\delta_{u_1,u_2}\delta_{v_1,v_2}\,  m^{d(v_1)}\omega(B' A').
\end{align*}
Thus 
\[
\omega(AB)=  m^{d(v_1)-d(u_1)}\omega(BA)=  m^{d(v_2)-d(u_2)}\omega(BA)=  m^{d(B)}\omega(BA).
\]
This ends the proof.
\end{proof}

By Lemma \ref{L:FadjS}, we particularly obtain that both $F_\circ$ and $S_\circ$ are closable. 
We use $F$ and $S$ to denote their corresponding closures. The closure $S$ is called the \textit{Tomita operator}.
The following is well-known:
$S$ and $F$ have polar decompositions
\begin{eqnarray*}
S=J\Delta^{\frac{1}{2}}=\Delta^{-\frac{1}{2}}J,\quad
F=J\Delta^{-\frac{1}{2}}=\Delta^{\frac{1}{2}}J,
\end{eqnarray*}
where  $\Delta=FS$ is the \textit{modular operator}, and $J$ is the \textit{modular conjugation}.
By Lemma \ref{L:FadjS} we have
$$
J( s_u s_v^*)
= m^{\frac{d(u)-d(v)}{2}} s_v s_u^*,
$$
and for $z\in\bC$
$$
\Delta^z( s_u s_v^*)
= m^{z(d(v)-d(u))} s_u s_v^*.
$$

Some obvious modifications of the proof of \cite[Theorem 5.3]{Yang1} shows that 
the algebra ${}^\circ{\O_\theta}$, with the inner product $\langle\cdot\mid  \cdot \rangle$:
$\langle A\mid B\rangle=\omega(A^*B)$,
is a modular Hilbert algebra. Note that $\pi_\omega(\O_\theta)''$ is nothing but the left von Neumann algebra of 
${}^\circ\O_{\theta}$ (\cite{Take}).
The Tomita-Takesaki modular
theory says that
$$
\Delta^{it}\pi(\O_\theta)''\Delta^{-it}=\pi(\O_\theta)''\qforal t\in\bR.
$$
For $z\in\bC$, let 
\[
\sigma_{z}(\pi(A))=\Delta^{iz}\pi(A)\Delta^{-iz} \quad \text{for all}\quad A\in{}^\circ\O_{\theta}.
\]
The one-parameter group $\{\sigma_t:t\in\bR\}$ is called the \textit{modular automorphism group}.
As in \cite{Yang1}, it is not hard to obtain the formula of $\sigma_t$
on generators:
\begin{align}
\label{E:modaut}
\sigma_t(\pi(s_us_v^*))
= m^{it(d(v)-d(u))}\pi(s_us_v^*).
\end{align}
Refer to \cite{KadRin, Str, Take} for more information on the Tomita-Takesaki modular theory.

Identifying $\O_\theta$ with $\pi(\O_\theta)$, one obtains a C*-dynamical system $(\O_\theta,\bR, \sigma)$, or simply $(\O_\theta, \sigma)$.
The main purpose of the next subsection is to look closer at the structure of the fixed point algebra $\O_\theta^\sigma$.

\subsection{The structure of $\O_\theta^\sigma$}
Recall that $m=(m_1,\ldots, m_\textsf{k})$, where, for $1\le i\le \textsf{k}$, $m_i$ is the number of edges of $\Fth$ of degree $\epsilon_i$. 
Define
\begin{align}
\label{E:groupG}
G=\{ g\in\bZ^\textsf{k}:  m^ g=1\}.
\end{align}
It is easy to check that $G$ is a subgroup of $\bZ^\textsf{k}$.
This group is important in this section and in classifying $\textsf{k}$-graph von Neumann algebras later. 
So it deserves having its own name. 

\begin{defn}
\label{D:groupG}
The group $G$ defined in \eqref{E:groupG} is called the intrinsic group of $\Fth$. 
\end{defn}

In the following, let $r$ be the rank of $G$ and $ g_1, ...,  g_r$ be the generators of $G$. 

Consider the set $\{\ln m_i: 1\le i \le \textsf{k}\}$. 
There are two cases for this set: it is either rationally dependent or independent.  

Suppose first that it is rationally dependent. Then it is equivalent to 
$ m^ g=1$ for some $ g$ in $\bZ^\textsf{k}\setminus\{ 0\}$.
Let us notice that for every $g\in G\setminus\{ 0\}$, one has
$ g_+\ne  0$ and $ g_-\ne  0$.

For $1\le p\le r$, consider the generator $ g_p$. 
Since $ g_p\in G$, we have 
\[
m^{{ g_p}_+}= m^{{ g_p}_-}.
\]
So there is a bijection, say $\jmath_p$ (once chosen and fixed), from the subset 
\[
\Lambda_{ {{g_p}_+}}=\{w\in\Fth: d(w)={ g_p}_+\}
\]
onto the subset 
\[
\Lambda_{ {{g_p}_-}}=\{w\in\Fth: d(w)={ g_p}_- \}.
\]
Define an operator $U_p$ in $\O_\theta$ by
\[
U_p=\sum_{u\in\Lambda_{{ g_p}_+}} s_us_{{\jmath_p}(u)}^*,
\]
which is easily seen to be unitary. 

We now define an action $\rho$ of the free group $\bF_r$ with the generators $a_1,...,a_r$ on $\fF$ as follows:
\[
\rho_{a_p}=\Ad_{U_p}\quad (1\le p\le r).
\]
Then we obtain a C*-dynamical system $(\fF, \bF_r,\rho)$. 

\smallskip
If $\{\ln m_i: 1\le i \le \textsf{k}\}$ is rationally independent, equivalently, $G=\{ 0\}$ and so $r=0$, , we make the convention that $U_0=I$.

\smallskip
The following two results generalize \cite[Theorem 3.1]{Yang2} where $G=\bZ(a,-b)$, and unifies some proofs of \cite{Yang1, Yang2}. 

\begin{prop}
\label{P:Ofixed}
Keep the same notation as above. Then 
\[
\O_\theta^\sigma=\ca(X\in \O_\theta: d(X)\in G)=\ca(\fF, U_1, \ldots, U_r). 
\] 
\end{prop}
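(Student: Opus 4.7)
The plan is to prove both equalities by exploiting the explicit formula \eqref{E:modaut} for $\sigma_t$ together with the spectral decomposition coming from the gauge action $\gamma$ of $\bT^\textsf{k}$.

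\emph{First equality.} From \eqref{E:modaut}, if $X\in\ran\Phi_n$ then $\sigma_t(X)=m^{-itn}X$; hence an element of pure degree $n$ is $\sigma$-fixed iff $m^{itn}=1$ for all $t\in\bR$, i.e.\ iff $m^n=1$, i.e.\ iff $n\in G$. For a general $X\in\O_\theta^\sigma$, note that $\sigma$ factors through $\gamma$ via the one-parameter subgroup $t\mapsto(m_i^{it})_{i}$ of $\bT^\textsf{k}$, so $\Phi_n\circ\sigma_t=\sigma_t\circ\Phi_n$ and $\Phi_n(X)$ is again $\sigma$-fixed for every $n$. Combined with the previous sentence this forces $\Phi_n(X)=0$ whenever $n\notin G$. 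The standard Fej\'er-type approximation for the strongly continuous action of the compact group $\bT^\textsf{k}$ then expresses $X$ as a norm-limit of convex combinations of its gauge Fourier coefficients, all of which now sit in $\bigcup_{n\in G}\ran\Phi_n$. Thus $\O_\theta^\sigma$ equals the closed linear span of these spectral subspaces; since $G$ is a group, this closed linear span is closed under products and adjoints and is therefore exactly $\ca(X\in\O_\theta:d(X)\in G)$.

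\emph{Second equality, easy inclusion.} We have $\fF=\ran\Phi_0$ with $0\in G$, while each $U_p$ is a finite sum of terms $s_u s_{\jmath_p(u)}^*$ whose common degree is $(g_p)_+-(g_p)_-=g_p\in G$. Hence $\fF\cup\{U_1,\dots,U_r\}\subseteq\ca(X:d(X)\in G)$, and the containment $\ca(\fF,U_1,\dots,U_r)\subseteq\ca(X:d(X)\in G)$ follows.

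\emph{Second equality, reverse inclusion.} By the first equality it suffices to show $\ran\Phi_n\subseteq\ca(\fF,U_1,\dots,U_r)$ for every $n\in G$. Unitarity of $U_p$ follows from the bijectivity of $\jmath_p$ together with the Cuntz relations $\sum_{u\in\Lambda_{(g_p)_+}} s_u s_u^* = I = \sum_{v\in\Lambda_{(g_p)_-}} s_v s_v^*$. Given $n\in G$, write $n=\sum_{p=1}^{r}k_p g_p$ with $k_p\in\bZ$ and set
\[
W_n \;=\; U_1^{k_1}\,U_2^{k_2}\cdots U_r^{k_r}
\quad(\text{with }U_p^{-1}:=U_p^*).
\]
Then $W_n$ is a unitary in $\ca(\fF,U_1,\dots,U_r)$ of degree $n$ (degrees of $\gamma$-spectral elements add under multiplication). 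For any $X\in\ran\Phi_n$, the element $W_n^*X$ lies in $\ran\Phi_0=\fF$, so $X=W_n(W_n^*X)\in W_n\cdot\fF\subseteq\ca(\fF,U_1,\dots,U_r)$. The degenerate case $r=0$ is covered by the convention $U_0=I$: then $G=\{0\}$ and $\O_\theta^\sigma=\fF$.

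\emph{Expected obstacle.} None of the individual steps is deep; the only things that need care are (i) verifying that $\Phi_n$ and $\sigma_t$ commute and that the Fej\'er approximation converges in norm for the strongly continuous gauge action, and (ii) being careful that the ordered product defining $W_n$ is genuinely a unitary in the generated C*-algebra with the correct degree, since the $U_p$'s need not commute. Both are routine once one records that $\ran\Phi_p\cdot\ran\Phi_q\subseteq\ran\Phi_{p+q}$ and that each $U_p$ is a finite-degree unitary.
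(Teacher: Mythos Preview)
Your argument is correct and follows the same strategy as the paper: both equalities are proved by reducing to the spectral subspaces $\ran\Phi_n$ and using that products of the $U_p$'s give unitaries of arbitrary degree in $G$. Your execution is slightly cleaner---where the paper explicitly manipulates a generator $s_us_v^*$ to place it in $\fF\,U_1^{n_1}\cdots U_r^{n_r}$, you observe directly that $W_n^*X\in\ran\Phi_0=\fF$ for any $X\in\ran\Phi_n$, and your Fej\'er argument supplies the details the paper defers to \cite{Yang2}.
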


\begin{proof}
Similar to the proof of \cite[Theorem 3.1]{Yang2}, 
to show the first ``=", it suffices to verify $\O_\theta^\sigma\subseteq \ca(X\in \O_\theta: d(X)\in G)$. To this end, let us consider the generator $X= s_u s_v^*$
in $\O_\theta^\sigma$. Since $\sigma_t(X)=X$ for all $t\in \bR$, one has $e^{it\ln  m^{d(v)-d(u)}}=1$ for all $t\in \bR$. This implies $ m^{d(v)-d(u)}=1$, namely, $d(v)-d(u)\in G$, as required. 

To prove the second ``=", one needs to check $\ca(X\in \O_\theta: d(X)\in G)\subseteq\ca(\fF, U_1, \ldots, U_r)$. For this, take $X\in \O_\theta$ with $d(X)\in G$ and 
without loss of generality assume 
that $X= s_u s_v^*$. Rewrite it `in the order of $ a_1, \ldots,  a_r$', and then use the definition of $U_p$'s to obtain 
\begin{align*}
X
&= s_{u_{1,1}}\cdots  s_{u_{1,n_1}} \cdots  s_{u_{r,1}}\cdots  s_{u_{r, n_r}}   s_v^*\\
&=U_1 s_{\jmath_1{(u_{1,1}})}\cdots U_1  s_{\jmath_1(u_{1,n_1})} \cdots U_r s_{\jmath_r(u_{r,1})}\cdots U_r s_{\jmath_r(u_{r, n_r})}   s_v^*\\  
&=U_1 s_{\jmath_1{(u_{1,1}})}\cdots U_1  s_{\jmath_1(u_{1,n_1})} \cdots U_r s_{\jmath_r(u_{r,1})}\cdots U_r\, s_{\jmath_r(u_{r, n_r})}  \cdot  s_v^*\cdot
 (U_r^{*n_r}\cdots U_1^{*n_1})\\
&\quad \cdot  (U_1^{n_1}\cdots U_r^{n_r})\\  
&\in \fF\,  U_1^{n_1}\cdots U_r^{n_r},
\end{align*}
where $u_{p,j}\in \Lambda_{{ g_p}_+}$ for all $1\le p\le r$ and $1\le j\le n_p$. 
\end{proof}

With the aid of \cite{Hop05}, the following lemma can be easily proved as \cite[Lemma 4.1]{Yang1} where $\textsf{k}=2$ , and so we omit its proof here. 

\begin{lem}
\label{L:masa}
The following statements are equivalent.
\begin{itemize}
\item[(i)] $\Fth$ is aperiodic.
\item[(ii)] The relative commutant of $\fF$ is trivial: $\fF'\cap\O_\theta=\bC$.
\end{itemize}
\end{lem}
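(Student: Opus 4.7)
My plan is to handle the two implications separately.

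For (ii)$\Rightarrow$(i), I would argue contrapositively. If $\Fth$ is periodic, then as recalled earlier in the introduction (citing \cite{DYrepk, DYperiod}), the centre $\Z(\O_\theta)$ is isomorphic to $\rC(\bT^s)$ for some $s\ge 1$ and therefore contains non-scalar elements. Since $\Z(\O_\theta)\subseteq\fF'\cap\O_\theta$, this already rules out (ii) and settles this direction.

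For (i)$\Rightarrow$(ii), the plan is a Fourier-style decomposition of any $X\in\fF'\cap\O_\theta$ against the gauge action $\gamma$. Because $\fF$ is pointwise fixed by $\gamma$, the identity $\gamma_\bt(X)A=\gamma_\bt(XA)=\gamma_\bt(AX)=A\gamma_\bt(X)$ for $A\in\fF$ and $\bt\in\bT^\textsf{k}$ shows that $\gamma_\bt(X)\in\fF'$ for every $\bt$; averaging against characters, each spectral component $\Phi_n(X)$ lies in $\fF'\cap\X_n$. For $n=0$ we then have $\Phi_0(X)\in\fF'\cap\fF=\Z(\fF)=\bC I$, since the UHF core is a factor. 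If I can establish $\Phi_n(X)=0$ for every $n\ne 0$, then a standard Fej\'er/Ces\`aro averaging on $\bT^\textsf{k}$ forces $X=\Phi_0(X)\in\bC I$, completing the argument.

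The crux---and the main obstacle---is therefore the vanishing claim $\fF'\cap\X_n=\{0\}$ for $n\ne 0$, and this is precisely the step where aperiodicity enters. Using the $\theta$-commutation relations to sort colours, any element $Y\in\X_n$ can be approximated in norm by finite sums of the form $\sum_\alpha s_{u_\alpha}F_\alpha s_{v_\alpha}^*$ with $d(u_\alpha)=n_+$, $d(v_\alpha)=n_-$ and $F_\alpha\in\fF$. Testing $YA=AY$ against matrix units $A=s_w s_{w'}^*$ of $\fF$ and evaluating using Lemma \ref{L:tronF} translates commutation with $\fF$ into a rigid system of relations on the coefficients $F_\alpha$; a non-zero solution would encode a periodicity relation among paths in $\Fth$, contradicting aperiodicity. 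The detection principle needed to run this argument uniformly across all colours in the higher-rank setting is supplied by \cite{Hop05}. A more hands-on alternative mirrors the classical Cuntz-algebra trick of iteratively compressing $Y\mapsto s_w^*Y s_w$ along words $w$ of arbitrarily large degree and invoking aperiodicity to drive $\|Y\|\to 0$. It is this spectral-subspace analysis, carried out uniformly across the $\textsf{k}$ colours, that constitutes the genuine technical content of the lemma and that extends the $\textsf{k}=2$ argument of \cite[Lemma 4.1]{Yang1} to arbitrary $\textsf{k}$ with the aid of \cite{Hop05}.
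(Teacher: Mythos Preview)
Your outline is correct and matches the paper's approach: the paper omits the proof entirely, pointing to \cite[Lemma 4.1]{Yang1} together with \cite{Hop05}, and your gauge-Fourier decomposition of $X\in\fF'\cap\O_\theta$ followed by an aperiodicity argument to kill the $n\ne 0$ spectral components is precisely that strategy. One small slip: $\Phi_n(X)$ lands in $\ran\Phi_n=\overline{\X_n}$ rather than the algebraic $\X_n$, but this does not affect the argument.
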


\begin{prop}
\label{P:fixpointalg}
Suppose $\rank G=1$.  
Then $\O_\theta^\sigma$ is a simple C*-algebra, which is isomorphic to the crossed product  of $\fF$ by $\bZ$: 
\[
\O_\theta^\sigma\cong \fF\rtimes_{\rho} \bZ.
\]
\end{prop}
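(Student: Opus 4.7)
The plan is to exhibit $\O_\theta^\sigma$ as an image of the full crossed product $\fF \rtimes_\rho \bZ$ under the canonical map coming from the universal property, and then to check simplicity of $\fF \rtimes_\rho \bZ$ in order to upgrade that quotient map to an isomorphism. Since $\rank G = 1$, Proposition \ref{P:Ofixed} gives $\O_\theta^\sigma = \ca(\fF, U_1)$, and by construction $U_1 X U_1^* = \rho(X)$ for every $X \in \fF$. The pair $(\iota_\fF, U_1)$, where $\iota_\fF \colon \fF \hookrightarrow \O_\theta^\sigma$ is the inclusion, is therefore a covariant representation of $(\fF, \bZ, \rho)$, so the universal property produces a $*$-homomorphism $\Psi \colon \fF \rtimes_\rho \bZ \to \O_\theta^\sigma$. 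Its image contains both $\fF$ and $U_1$, so by Proposition \ref{P:Ofixed} again it is all of $\O_\theta^\sigma$, and $\Psi$ is surjective.

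Next I would show that $\fF \rtimes_\rho \bZ$ is simple, which simultaneously forces $\Psi$ to be injective. The algebra $\fF$ is UHF, hence simple, and $\bZ$ is discrete and amenable, so the full and reduced crossed products coincide. By Kishimoto's theorem (or the classical Olesen--Pedersen result for $\bZ$-actions on simple C*-algebras), it suffices to verify that $\rho$ is outer, i.e.\ that no nonzero integral power $\rho^n$ is inner on $\fF$.

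For the outerness argument, suppose for contradiction that there is $n \in \bZ \setminus \{0\}$ and $V \in \U(\fF)$ with $\rho^n = \Ad_V$. Since $\rho^n = \Ad_{U_1^n}|_\fF$, the unitary $V^* U_1^n \in \O_\theta$ commutes with every element of $\fF$; by Lemma \ref{L:masa} (applicable because $\Fth$ is aperiodic), $\fF' \cap \O_\theta = \bC$, so $V^* U_1^n = \lambda I$ for some $\lambda \in \bT$, and therefore $U_1^n = \lambda V \in \fF$. On the other hand, each summand $s_u s_{\jmath_1(u)}^*$ in $U_1$ has degree $g_1$, so $U_1^n \in \ran \Phi_{n g_1}$, and since $g_1$ generates $G \cong \bZ$ we have $n g_1 \ne 0$; consequently $\Phi_0(U_1^n) = 0$, contradicting $U_1^n \in \fF = \ran \Phi_0$ being a nonzero element. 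Hence $\rho$ is outer, $\fF \rtimes_\rho \bZ$ is simple, and $\Psi$ is the desired isomorphism, which also shows $\O_\theta^\sigma$ is simple.

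The only real obstacle is the outerness step; everything else is formal once outerness is established, and the outerness reduces cleanly to the combination of the trivial relative commutant $\fF' \cap \O_\theta = \bC$ and the fact that $U_1^n$ sits in a nontrivial spectral subspace of the gauge action whenever $n \ne 0$.
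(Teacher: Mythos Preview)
Your proof is correct and follows essentially the same strategy as the paper's: both produce a surjection from the crossed product via its universal property and Proposition~\ref{P:Ofixed}, establish outerness of $\rho$ by means of Lemma~\ref{L:masa}, and then invoke Kishimoto's theorem together with amenability of $\bZ$ to obtain simplicity and hence injectivity. The only difference is that you spell out the outerness argument in full (using the spectral-subspace contradiction for $U_1^n$), whereas the paper defers that step to \cite{Yang2}.
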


\begin{proof}
If $r=0$, namely, $\{\ln m_i: 1\le i \le \textsf{k}\}$ is rationally independent, then $\O_\theta^\sigma$ coincides with $\fF$ by Proposition \ref{P:Ofixed} and so clearly has all required properties. 

Suppose $r=1$.  
Since $\Fth$ is aperiodic, as shown in \cite[Theorem 3.1]{Yang2} using Lemma \ref{L:masa}, $\rho$ is aperiodic meaning that $\rho^n$ is outer for
every $n\in \bZ$. Then it follows from,
e.g. \cite{Kish}, that the reduced crossed product $\fF\rtimes_{\rho,\, \text{r}}\bZ$ is simple. But since $\bZ$ is amenable, one has 
$\fF\rtimes_{\rho}\bZ\cong\fF\rtimes_{\rho,\, \text{r}}\bZ$. 
So the universal property of $\fF\rtimes_\rho \bZ$ gives the required isomorphism by Proposition \ref{P:Ofixed}.
\end{proof}

It would be interesting to know the dynamical system structure of $\O_\theta^\sigma$ when $\rank G>1$.

\section{The Dixmier Property of $\O_\theta^\sigma$}
\label{S:Dix}

In \cite{Arc80}, Archbold studied the Dixmier property of Cuntz algebras $\O_n$. But actually 
more was obtained there:\footnote{This was kindly informed to the author by Professor Robert Archbold.}
the averaging operators $\alpha$ in Definition \ref{D:Dix} can be chosen from $\textsf{Ave}(\O_n, \U(X_0))$, 
where $X_0$ is the algebraic part of the core of $\O_n$.
 It is this stronger property of $\O_n$ that motivates this section.  
The aim of this section is to show that the fixed point algebra $\O_\theta^\sigma$ has a unique tracial state if 
either $\Fth$ has the little pull-back property, or the intrinsic group of $\Fth$ has rank $0$. 
To achieve this, we first prove that $\O_\theta^\sigma$ is simple and has the Dixmier property. 

However, the higher rank flavour of \textsf{k}-graphs causes some complication here. For instance, the degree of a standard generator of $\O_n$
and $0$ are always comparable; but this is not the case for \textsf{k}-graphs. So it seems that some key results 
in \cite{Arc80} can not be generalized to \textsf{k}-graph algebras.
Fortunately, in our case, we can make full use of the little pull-back property of $\Fth$  
and the defect free property of Cuntz-type representations to achieve our goal.

Let $\textsf{E}$ be a Cuntz-type representation of $\Fth$. 
For $ p\in \bN^\textsf{k}$, let $\Omega_{ p}$ be the set of all functions from $\Lambda^{ p}$, the set of all words in $\Fth$ 
with degree $ p$, to $\{0,1\}$, and $\P_{ p}$ be the permutation group of $\Lambda^{ p}$.
For $f\in \Omega_{{ p}}$ and $\varsigma\in \P_{{ p}}$, define the unitary operator by 
\[
U_{(f,\varsigma)}=\sum_{w\in\Lambda^{ p}}(-1)^{f(w)}  \textsf{E}_{\varsigma(w)} \textsf{E}_w^*,
\]
and the averaging operator on $\ca(\textsf{E})$ via
\begin{align}
\label{E:gam}
\alpha_{ p}(A)
=\frac{1}{2^{{ m}^{ p}}({ m}^{ p})!} \sum_{f\in\Omega_{{ p}}, \, \varsigma\in\P_{{ p}}}U_{(f,\varsigma)} A U_{(f,\varsigma)}^*\qforal A\in \ca(\textsf{E}).
\end{align}
As $ \gamma^\textsf{E}_ p$ in Section \ref{S:pre}, when the operators of $\textsf{E}$ are from $\O_\theta$,  we use $\alpha^\textsf{E}_ p$ to denote its extension to $\O_\theta$.

\subsection{Some auxiliary lemmas}
In this subsection, we prove some technical lemmas, which will be useful later. 

\begin{lem}
\label{L:2basic}
Let $\textsf{E}$ be a Cuntz-type representation of $\O_\theta$ and $p\in\bN^\textsf{k}$. 
Suppose that $u,v\in\Fth$ are such that $u=u_1u_2$ and $v=v_1v_2$ with $u_i$, $v_i$ in $\Fth$ ($i=1,2$)
and $d(u_1)=d(v_1)=p$. 
Then
\[
\alpha_ p( \textsf{E}_u \textsf{E}_v^*)
=\frac{\delta_{u_1,v_1}}{m^p} \gamma_ p( \textsf{E}_{u_2})\gamma_p(\textsf{E}_{v_2})^*.
\]
\end{lem}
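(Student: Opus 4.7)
The strategy is a direct computation: expand a single conjugate $U_{(f,\varsigma)} \textsf{E}_u \textsf{E}_v^* U_{(f,\varsigma)}^*$, then average over $f$ and $\varsigma$. The Cuntz-type relations for the family $\{\textsf{E}_w : w \in \Lambda^p\}$ (which is itself a Cuntz-type $m^p$-tuple, by the factorization property) will collapse all the cross terms. The whole argument should just be careful bookkeeping.

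\textbf{Step 1: a single conjugate.} Writing $\textsf{E}_u = \textsf{E}_{u_1}\textsf{E}_{u_2}$ and similarly for $v$, and using $\textsf{E}_w^* \textsf{E}_{w'} = \delta_{w,w'} I$ for $w, w' \in \Lambda^p$, I would compute
\begin{align*}
U_{(f,\varsigma)} \textsf{E}_u \textsf{E}_v^* U_{(f,\varsigma)}^*
&= \sum_{w, w' \in \Lambda^p} (-1)^{f(w)+f(w')} \textsf{E}_{\varsigma(w)} \textsf{E}_w^* \textsf{E}_{u_1} \textsf{E}_{u_2} \textsf{E}_{v_2}^* \textsf{E}_{v_1}^* \textsf{E}_{w'} \textsf{E}_{\varsigma(w')}^* \\
&= (-1)^{f(u_1)+f(v_1)} \textsf{E}_{\varsigma(u_1)} \textsf{E}_{u_2} \textsf{E}_{v_2}^* \textsf{E}_{\varsigma(v_1)}^*.
\end{align*}
Only the pair $(w,w')=(u_1,v_1)$ survives because of the orthogonality among the $\textsf{E}_w$'s with $d(w)=p$.

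\textbf{Step 2: averaging over $f$ and $\varsigma$.} Summing over $f \in \Omega_p$: if $u_1 \ne v_1$, flipping $f$ at $u_1$ alone gives a sign cancellation, so the sum vanishes; if $u_1 = v_1$, the sign is constantly $+1$, producing the factor $2^{m^p}\delta_{u_1,v_1}$. Thus
\[
\alpha_p(\textsf{E}_u \textsf{E}_v^*) = \frac{\delta_{u_1,v_1}}{(m^p)!} \sum_{\varsigma \in \P_p} \textsf{E}_{\varsigma(u_1)} \textsf{E}_{u_2} \textsf{E}_{v_2}^* \textsf{E}_{\varsigma(u_1)}^*.
\]
Next, as $\varsigma$ ranges over $\P_p$, for each fixed $w \in \Lambda^p$ the equation $\varsigma(u_1) = w$ has exactly $(m^p-1)!$ solutions, so
\[
\sum_{\varsigma \in \P_p} \textsf{E}_{\varsigma(u_1)} \textsf{E}_{u_2} \textsf{E}_{v_2}^* \textsf{E}_{\varsigma(u_1)}^* = (m^p-1)! \sum_{w \in \Lambda^p} \textsf{E}_w \textsf{E}_{u_2} \textsf{E}_{v_2}^* \textsf{E}_w^* = (m^p-1)!\, \gamma_p(\textsf{E}_{u_2}\textsf{E}_{v_2}^*).
\]

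\textbf{Step 3: conclusion.} Combining the two averages and using that $\gamma_p$ is a $*$-endomorphism (so $\gamma_p(\textsf{E}_{u_2}\textsf{E}_{v_2}^*) = \gamma_p(\textsf{E}_{u_2})\gamma_p(\textsf{E}_{v_2})^*$) yields
\[
\alpha_p(\textsf{E}_u \textsf{E}_v^*) = \frac{\delta_{u_1,v_1}}{m^p}\, \gamma_p(\textsf{E}_{u_2})\gamma_p(\textsf{E}_{v_2})^*,
\]
as claimed. There is no serious obstacle here; the only thing to get right is that the $\textsf{E}_w$ with $d(w)=p$ really do form a Cuntz-type family (guaranteed by the factorization property plus the Cuntz-type hypothesis on each $E^i$), which is what makes Step 1 collapse so cleanly.
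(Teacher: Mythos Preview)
Your proof is correct and follows essentially the same route as the paper's: both expand $U_{(f,\varsigma)} \textsf{E}_u \textsf{E}_v^* U_{(f,\varsigma)}^*$ via the Cuntz relations for $\{\textsf{E}_w : w\in\Lambda^p\}$, then average first over $f$ (producing the $\delta_{u_1,v_1}$) and then over $\varsigma$ (producing $\gamma_p$), finishing with the multiplicativity of $\gamma_p$. Your write-up is in fact a bit more explicit than the paper's sketch about the sign-cancellation and the permutation count, but the argument is the same.
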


\begin{proof}
The following proof is essentially the same as that of \cite[Proposition 2 (ii)]{Arc80}, so we only sketch it here.

Indeed, we have 
\begin{align*}
\nonumber
m^{ p}\, \alpha_ p( \textsf{E}_u \textsf{E}_v^*)
&=\frac{1}{2^{{ m}^{ p}}({ m}^{ p}-1)!}\sum_{f\in\Omega_ p,\,\varsigma\in\P_ p} \left(\sum_{\mu\in\Lambda^{ p}} (-1)^{f(\mu)} \textsf{E}_{\varsigma(\mu)} \textsf{E}_\mu^*\right) \cdot \\
\nonumber
&\qquad\qquad\quad \textsf{E}_{u_1} \textsf{E}_{u_2} \textsf{E}_{v_2}^* \textsf{E}_{v_1}^*
\cdot \left(\sum_{\nu\in\Lambda^{ p}} (-1)^{f(\nu)} \textsf{E}_\nu \textsf{E}_{\varsigma(\nu)}^*\right) \\
\nonumber
&=\frac{1}{2^{{ m}^{ p}}({ m}^{ p}-1)!}\sum_{f\in\Omega_ p,\,\varsigma\in\P_ p} (-1)^{f(u_1)+f(v_1)} \textsf{E}_{\varsigma(u_1)} \textsf{E}_{u_2} \textsf{E}_{v_2}^* \textsf{E}_{\varsigma(v_1)}^* \\
&=\frac{\delta_{u_1,v_1}}{({ m}^{ p}-1)!}\sum_{\varsigma\in\P_ p} \textsf{E}_{\varsigma(u_1)} \textsf{E}_{u_2} \textsf{E}_{v_2}^* \textsf{E}_{\varsigma(u_1)}^* \\
& =\delta_{u_1,v_1} \gamma_ p( \textsf{E}_{u_2} \textsf{E}_{v_2}^*)\\
& =\delta_{u_1,v_1} \gamma_ p( \textsf{E}_{u_2})\gamma_p(\textsf{E}_{v_2})^*\quad (\text{as }\gamma_p \text{ is a C*-homomorphism}).
\end{align*}
We are done. 
\end{proof}

The following lemma is really of higher rank flavour, and it is crucial in the proof of Lemma \ref{L:lem} below. 

\begin{lem}
\label{L:id}
Let $u,v\in\Fth$ be such that $d(u)\wedge d(v)= 0$ and $p=d(u)+d(v)$.  Suppose that $\textsf{E}$ is a Cuntz-type representation of $\Fth$.
Then we have the following. 
\begin{itemize}
\item[(i)]
$\displaystyle
\alpha_ p(\textsf{E}_u\textsf{E}_v^*)=\frac{1}{ m^ p}\sum_{(v',u')\in\Lambda^{\text{min}}(u,v)} \gamma_ p (\textsf{E}_{v'}^*\textsf{E}_{u'}).
$
\\
Here we use the convention: if $|\Lambda^{\text{min}}(u,v)|=0$, then $\textsf{E}_{v'}^*\textsf{E}_{u'}:=0$.

\item[(ii)]
If $\Fth$ has the little pull-back property, then 
\[
\alpha_ p(\textsf{E}_u\textsf{E}_v^*)=0 \text{ or }\frac{1}{ m^ p}\gamma_ p (\textsf{E}_{u''}\textsf{E}_{v''}^*),
\]
for a unique pair $(u'', v'')\in\Fth\times \Fth$ with $d(u'')=d(u)$ and $d(v'')=d(v)$. 
\end{itemize}
\end{lem}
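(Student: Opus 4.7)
The plan is to compute $\alpha_p(\textsf{E}_u\textsf{E}_v^*)$ directly from its definition, in direct analogy with the proof of Lemma \ref{L:2basic}, but now using the factorization property of $\Fth$ to handle words whose degree lies strictly below $p$. The crucial observation that replaces the simple reduction used there is: for $w\in\Lambda^p$ and $u\in\Fth$ with $d(u)\le p$, the Cuntz-type orthogonality together with the unique factorization $w=w_1 w_2$ with $d(w_1)=d(u)$ gives
\[
\textsf{E}_w^*\textsf{E}_u = \delta_{w_1,u}\,\textsf{E}_{w_2}^*.
\]
Inside the sum defining $U_{(f,\varsigma)}\textsf{E}_u$, this collapses it to a sum over $\xi\in\Lambda^{d(v)}$ parametrizing $w=u\xi$; a symmetric identity handles $\textsf{E}_v^* U_{(f,\varsigma)}^*$.

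With this reduction, direct expansion produces
\[
U_{(f,\varsigma)}\textsf{E}_u \textsf{E}_v^* U_{(f,\varsigma)}^* = \sum_{\xi\in\Lambda^{d(v)},\,\eta\in\Lambda^{d(u)}} (-1)^{f(u\xi)+f(v\eta)}\,\textsf{E}_{\varsigma(u\xi)}\textsf{E}_\xi^*\textsf{E}_\eta\textsf{E}_{\varsigma(v\eta)}^*.
\]
Averaging the signs over $f\in\Omega_p$ is a standard parity computation that yields $\delta_{u\xi,\,v\eta}$, so only pairs $(\xi,\eta)\in\Lambda^{\min}(u,v)$ survive. For each such pair, setting $w:=u\xi=v\eta$, the permutation enters only through $\varsigma(w)$; averaging over $\varsigma\in\P_p$ then distributes $\varsigma(w)$ uniformly over $\Lambda^p$ with multiplicity $(m^p-1)!$, giving exactly $\frac{1}{m^p}\gamma_p(\textsf{E}_\xi^*\textsf{E}_\eta)$. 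This proves (i).

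For (ii), I invoke the theorem of Exel \cite{Exe08} quoted in Section \ref{SS:kgraph}: the little pull-back property is equivalent to single alignment, forcing $|\Lambda^{\min}(u,v)|\le 1$. If empty, (i) gives $0$; otherwise, with $(v',u')$ its unique element, it remains to rewrite the factor $\textsf{E}_{v'}^*\textsf{E}_{u'}$ in the desired form. Inserting $I=\sum_{w\in\Lambda^p}\textsf{E}_w\textsf{E}_w^*$ between $\textsf{E}_{v'}^*$ and $\textsf{E}_{u'}$ and rerunning the factorization analysis of the first step produces the Cuntz--Krieger-style identity
\[
\textsf{E}_{v'}^*\textsf{E}_{u'} = \sum_{(u'',v'')\in\Lambda^{\min}(v',u')} \textsf{E}_{u''}\textsf{E}_{v''}^*,
\]
which again has at most one term by single alignment; an empty sum yields $0$, and a single term $(u'',v'')$ yields $\frac{1}{m^p}\gamma_p(\textsf{E}_{u''}\textsf{E}_{v''}^*)$, with the required degrees $d(u'')=d(u)$ and $d(v'')=d(v)$ built into the definition of $\Lambda^{\min}$ via $d(u')\wedge d(v')=0$. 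The main obstacle, I expect, is only the careful index bookkeeping in the first reduction; once that is spelled out, the averaging arguments are clean higher-rank analogues of those in Lemma \ref{L:2basic}.
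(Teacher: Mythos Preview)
Your proposal is correct and follows essentially the same approach as the paper: a direct expansion of $U_{(f,\varsigma)}\textsf{E}_u\textsf{E}_v^*U_{(f,\varsigma)}^*$, followed by the parity average over $f$ to force $u\xi=v\eta$ (hence $(\xi,\eta)\in\Lambda^{\min}(u,v)$), then the average over $\varsigma$ to produce $\gamma_p$; for (ii) you, like the paper, reduce to single alignment and apply the Cuntz--Krieger identity to $\textsf{E}_{v'}^*\textsf{E}_{u'}$. The only cosmetic difference is that you factor inside $U\textsf{E}_u$ and $\textsf{E}_v^*U^*$ before averaging over $f$, whereas the paper first averages over $f$ to collapse the two $\Lambda^p$-sums to a single diagonal sum and then factors---the computations are otherwise identical.
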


\begin{proof}
Suppose that $\textsf{E}$ is a Cuntz-type representation of $\Fth$. 

(i) 
Similar to Lemma \ref{L:2basic}, we prove it from the following calculations.
\begin{align*}
& \alpha_ p(\textsf{E}_u\textsf{E}_v^*)\\
&=\frac{1}{2^{ m^ p}( m^ p)!}\sum_{f\in\Omega_ p,\, \varsigma\in\P_ p}\left(\sum_{ \mu\in\Lambda^ p} (-1)^{f( \mu)} \textsf{E}_{\varsigma( \mu)} \textsf{E}_ \mu^*\cdot \textsf{E}_u\textsf{E}_v^*\cdot 
\sum_{ \nu\in\Lambda^ p} (-1)^{f( \nu)} \textsf{E}_{ \nu} \textsf{E}_{\varsigma( \nu)}^*  \right)\\
&=\frac{1}{( m^ p)!}\sum_{\varsigma}\left(\sum_{ \mu\in\Lambda^ p} \textsf{E}_{\varsigma( \mu)} \textsf{E}_ \mu^*\cdot \textsf{E}_u\textsf{E}_v^*\cdot \textsf{E}_{ \mu} 
\textsf{E}_{\varsigma( \mu)}^*  \right)
\end{align*}
We now write $\mu$ as $ \mu_1 \mu_2$  with $d( \mu_1)=d(u)$ and $d( \mu_2)=d(v)$. Continuing above, one has 
\begin{align*}
& \alpha_ p(\textsf{E}_u\textsf{E}_v^*)\\
&=\frac{1}{( m^ p)!}\sum_{\varsigma}\left(\sum_{ \mu\in\Lambda^ p} \textsf{E}_{\varsigma( \mu)} \textsf{E}_{ \mu_1 \mu_2}^*\cdot \textsf{E}_u\textsf{E}_v^*\cdot \textsf{E}_{ \mu_1 \mu_2} 
\textsf{E}_{\varsigma( \mu)}^*  \right)\\
&=\frac{1}{( m^ p)!}\sum_{\varsigma}\left(\sum_{ \mu_2\in\Lambda^{d(v)}} \textsf{E}_{\varsigma(u\mu_2)} \textsf{E}_{\mu_2}^*\cdot \textsf{E}_v^* \textsf{E}_{ u} \cdot \textsf{E}_{ \mu_2} 
\textsf{E}_{\varsigma( u\mu_2)}^*  \right)\\
&=\frac{1}{( m^ p)!}\sum_{\varsigma}\left(\sum_{ \mu_2\in\Lambda^{d(v)}} \sum_{(v',u')\in\Lambda^{\text{min}}(u,v)}\textsf{E}_{\varsigma(u\mu_2)} \textsf{E}_{\mu_2}^*
\cdot \textsf{E}_{u'} \textsf{E}_{v'}^* \cdot \textsf{E}_{ \mu_2} \textsf{E}_{\varsigma( u\mu_2)}^*  \right)\\
&=\frac{1}{( m^ p)!}\sum_{\varsigma}\left( \sum_{(v',u')\in\Lambda^{\text{min}}(u,v)}\textsf{E}_{\varsigma(uv')}\cdot \textsf{E}_{v'}^*\textsf{E}_{u'} \cdot
 \textsf{E}_{\varsigma( uv')}^*  \right)\\
&=\frac{1}{( m^ p)!}\sum_{(v',u')\in\Lambda^{\text{min}}(u,v)}\left(\sum_{\varsigma}\textsf{E}_{\varsigma(uv')}\cdot \textsf{E}_{v'}^*\textsf{E}_{u'} \cdot
\textsf{E}_{\varsigma( uv')}^*  \right)\\
&=\frac{1}{m^ p}\sum_{(v',u')\in\Lambda^{\text{min}}(u,v)}\gamma_p(\textsf{E}_{v'}^*\textsf{E}_{u'}).
\end{align*}

(ii) Suppose that $\Fth$ has the little pull-back property. Then we have a most one pair $(v',u')$ in $\Lambda^{\text{min}}(u,v)$, and
a most one pair $(u'',v'')$ in $\Lambda^{\text{min}}(v',u')$.
Thus $ \alpha_ p(\textsf{E}_u\textsf{E}_v^*)=\gamma_ p (\textsf{E}_{u''}\textsf{E}_{v''}^*)$ if  
$|\Lambda^{\text{min}}(u,v)|=|\Lambda^{\text{min}}(v',u')|=1$ and $(u'',v'')\in \Lambda^{\text{min}}(v',u')$, 
and $ \alpha_ p(\textsf{E}_u\textsf{E}_v^*)=0$, otherwise. 
\end{proof}

For $n\in\bN^\textsf{k}$, recall that $\X_n$ is the algebraic part of the spectral subspace $\ran\Phi_n$ of  the gauge action of $\O_\theta$. 
It follows from the definition of $\alpha_ p$ directly that $\X_ n$ is invariant for $\alpha_ p$. We record it below for later reference. 
 
\begin{lem}
\label{L:predeg}
For $ p\in\bN^{\textsf{k}}$ and $ n\in \bZ^\textsf{k}$, we have 
$
\alpha_ p(\X_ n)\subseteq \X_ n.
$
\end{lem}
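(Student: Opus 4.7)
The plan is to verify the lemma by a direct computation on standard generators, exploiting the fact that the conjugating unitaries $U_{(f,\varsigma)}$ used to build $\alpha_p$ all have degree zero with respect to the gauge action.

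First I would observe that for each $f\in \Omega_p$ and $\varsigma \in \P_p$, every summand $s_{\varsigma(w)} s_w^*$ of
\[
U_{(f,\varsigma)} = \sum_{w\in\Lambda^p}(-1)^{f(w)} s_{\varsigma(w)} s_w^*
\]
satisfies $d(\varsigma(w)) = d(w) = p$, so $U_{(f,\varsigma)} \in \fF_p \subset \X_0$, and similarly for $U_{(f,\varsigma)}^*$. Since $\alpha_p$ is linear and $\X_n$ is a span, it then suffices to show that $\alpha_p(s_u s_v^*) \in \X_n$ for each generator with $d(u) - d(v) = n$.

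Fixing such a generator and an arbitrary pair $(f,\varsigma)$, expansion yields the finite sum
\[
U_{(f,\varsigma)}\, s_u s_v^*\, U_{(f,\varsigma)}^* = \sum_{w_1, w_2 \in \Lambda^p} (-1)^{f(w_1)+f(w_2)}\, s_{\varsigma(w_1)} s_{w_1}^* s_u s_v^* s_{w_2} s_{\varsigma(w_2)}^*.
\]
Using the Cuntz-Krieger relations together with the finite alignment of $\Fth$, the middle piece $s_{w_1}^* s_u s_v^* s_{w_2}$ reduces to a finite linear combination of monomials $s_\mu s_\nu^*$, and a routine bookkeeping of degrees gives $d(\mu) - d(\nu) = (p + d(u)) - (d(v) + p) = n$ for each such monomial. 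Multiplying on the left by $s_{\varsigma(w_1)}$ (degree $p$) and on the right by $s_{\varsigma(w_2)}^*$ (degree $-p$) preserves the degree, so the whole conjugate lies in $\X_n$. Averaging over the finite set of pairs $(f,\varsigma)$ then places $\alpha_p(s_u s_v^*)$ in $\X_n$, completing the proof.

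No real obstacle is expected: the content of the lemma is essentially that $\alpha_p$ is a finite linear combination of conjugations by degree-zero elements of the algebraic part of $\fF$, and both properties (degree preservation and staying in the algebraic span) are transparent from the formula for $U_{(f,\varsigma)}$.
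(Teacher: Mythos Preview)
Your proposal is correct and is precisely the direct verification the paper has in mind when it says the lemma ``follows from the definition of $\alpha_p$ directly'': the key observation is that each $U_{(f,\varsigma)}$ lies in $\X_0$, so conjugation by it preserves both the gauge-degree and membership in the algebraic span ${}^\circ\O_\theta$. One could streamline your middle computation by noting that $\gamma_\bt(U_{(f,\varsigma)} A U_{(f,\varsigma)}^*) = U_{(f,\varsigma)}\gamma_\bt(A)U_{(f,\varsigma)}^*$ for all $\bt\in\bT^\textsf{k}$, which immediately gives degree preservation without expanding $s_{w_1}^* s_u s_v^* s_{w_2}$, but this is a cosmetic difference only.
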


In the rest of this section, let us fix the endomorphism $\gamma_p$ and the averaging operator $\alpha_p$ on $\O_\theta$ as follows: 
\begin{align*}
 \gamma_ p(A)&=\sum_{u\in\Lambda^p}  s_u A  s_u^*\qforal A\in\O_\theta,\\
\alpha_{ p}(A)
&=\frac{1}{2^{{ m}^{ p}}({ m}^{ p})!} \sum_{f\in\Omega_{{ p}}, \, \varsigma\in\P_{{ p}}} U_{(f,\varsigma)} A  U_{(f,\varsigma)}^*\qforal A\in\O_\theta,
\end{align*}
where $U_{f,\varsigma}=\sum_{u\in\Lambda^{ p}}(-1)^{f(u)}  s_{\varsigma(u)} s_u^*$.

\begin{lem} 
\label{L:lem}
Suppose that $\Fth$ has the little pull-back property, and let $\epsilon>0$.
\begin{itemize}

\item[(i)] If $A\in \X_ n$ with $ n\ne 0$, then there exists 
$\alpha\in\textsf{Ave}(\O_\theta, \U(\X_ 0))$ such that $\|\alpha(A)\|<\epsilon$. 

\item[(ii)] If $A=\sum_{ n \ne  0} A_ n$ in ${}^\circ{\O_\theta}$ with $A_ n\in\X_ n$, then there is 
$\alpha\in\textsf{Ave}(\O_\theta, \U(\X_ 0))$ such that $\|\alpha(A)\|<\epsilon$. 
\end{itemize}
\end{lem}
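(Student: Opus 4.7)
The plan is to establish part (i) by an iterative application of averaging operators $\alpha_{\overline{p}}$ whose effect we compute explicitly using the little pull-back property, and then deduce part (ii) from (i) by shrinking one spectral component at a time.

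\textbf{Part (i).} Write $A=\sum_{j=1}^N c_j\,s_{u_j}s_{v_j}^*$ with each $d(u_j)-d(v_j)=n\ne 0$. The first step is to extend Lemma \ref{L:id}(ii) by computing $\alpha_{\overline{p}}(s_us_v^*)$ for an arbitrary generator and any $\overline{p}\in\bN^{\textsf{k}}$ with $\overline{p}\ge d(u)\vee d(v)$. Since the little pull-back property forces $\Fth$ to be singly aligned (\cite{Exe08}), the Cuntz--Krieger identity $s_\mu^*s_\beta=\sum_{(\eta,\xi)\in\Lambda^{\text{min}}(\mu,\beta)}s_\eta s_\xi^*$ always reduces to at most one summand. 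Repeating the calculation in the proof of Lemma \ref{L:id} then yields
\[
\alpha_{\overline{p}}(s_us_v^*)=\frac{1}{m^{\overline{p}}}\sum_{\zeta\in\Lambda^{\overline{p}-d(u)\vee d(v)}}\gamma_{\overline{p}}(s_{\alpha_\zeta}s_{\beta_\zeta}^*),
\]
where $(\alpha_\zeta,\beta_\zeta)$ is the unique (possibly empty) element of $\Lambda^{\text{min}}(\xi_0\zeta,\eta_0\zeta)$ with $(\xi_0,\eta_0)\in\Lambda^{\text{min}}(u,v)$, and $d(\alpha_\zeta)=n_+$, $d(\beta_\zeta)=n_-$. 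Setting $|n|:=n_++n_-$, the triangle inequality delivers the uniform bound $\|\alpha_{\overline{p}}(s_us_v^*)\|\le m^{-d(u)\wedge d(v)-|n|}$.

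The crucial feature is that after expanding $\gamma_{\overline{p}}(s_{\alpha_\zeta}s_{\beta_\zeta}^*)=\sum_{|w|=\overline{p}}s_{w\alpha_\zeta}s_{w\beta_\zeta}^*$, each summand is again a generator of degree $n$ but with common degree at least $\overline{p}$. Choosing $\overline{p}_1\ge\max_j d(u_j)\vee d(v_j)$ and $\overline{p}_{i+1}\ge\overline{p}_i+|n|$, the boxed formula reapplies to every surviving summand, and bookkeeping the coefficients produces
\[
\bigl\|\alpha_{\overline{p}_k}\circ\cdots\circ\alpha_{\overline{p}_1}(A)\bigr\|\le \sum_{j=1}^N\frac{|c_j|}{m^{d(u_j)\wedge d(v_j)+k|n|}}\le\frac{\sum_j|c_j|}{m^{k|n|}},
\]
which is $<\epsilon$ for $k$ sufficiently large. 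Each $\alpha_{\overline{p}_i}$ is a convex combination of conjugations by unitaries $U_{(f,\varsigma)}\in\U(\X_0)$, so the composition remains in $\textsf{Ave}(\O_\theta,\U(\X_0))$.

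\textbf{Part (ii).} Write $A=\sum_{i=1}^N A_{n_i}$ with the $n_i\ne 0$ distinct, set $\delta=\epsilon/N$, and proceed inductively. Because every averaging operator over $\U(\X_0)$ is degree-preserving and contractive, the $n_k$-piece of the intermediate expression still lies in the algebraic $\X_{n_k}$ (Lemma \ref{L:predeg}), and part (i) provides $\alpha^{(k)}\in\textsf{Ave}(\O_\theta,\U(\X_0))$ driving its norm below $\delta$. Previously shrunk pieces (for $i<k$) keep their $<\delta$ bound through later stages by contractivity. After $N$ stages, the composite $\alpha=\alpha^{(N)}\circ\cdots\circ\alpha^{(1)}$ satisfies $\|\alpha(A)\|\le\sum_i\|\Phi_{n_i}(\alpha(A))\|<N\delta=\epsilon$.

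\textbf{Main difficulty.} The combinatorial bookkeeping in part (i) is the chief obstacle. Archbold's work \cite{Arc80} sets the template, but in the higher-rank setting one must carefully invoke the little pull-back property at every instance of $s_\mu^*s_u$ and $s_v^*s_\mu$ to reduce the relevant $\Lambda^{\text{min}}$-sets to singletons, and then verify that composing the $\alpha_{\overline{p}_i}$'s genuinely yields geometric decay $m^{-k|n|}$ rather than stalling after the first step.
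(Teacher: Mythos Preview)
Your argument is correct and follows the same overall strategy as the paper: iterate averaging operators from $\textsf{Ave}(\O_\theta,\U(\X_0))$, exploit single alignment (via the little pull-back property) to control each step, and obtain geometric decay $m^{-k|n|}$. Part (ii) is handled identically in both proofs.

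There is, however, a noteworthy organisational difference in part (i). The paper proceeds in two phases: it first applies $\alpha_q$ (Lemma \ref{L:2basic}) to push $A$ into the set $\S=\spn\{\gamma_q(s_u)\gamma_q(s_v)^*:d(u)=n_+,\ d(v)=n_-\}$, and then iterates $\alpha_p^{\mathsf{E}^i}$ for \emph{varying} Cuntz-type representations $\mathsf{E}^i=\gamma_{(i-1)p+q}(s)$, invoking Lemma \ref{L:id}(ii) at each step (which only treats generators with $d(u)\wedge d(v)=0$). You instead establish a single, more general formula for $\alpha_{\overline p}(s_us_v^*)$ valid for arbitrary $u,v$ with $\overline p\ge d(u)\vee d(v)$, and then iterate the \emph{standard} averaging operators $\alpha_{\overline p_i}$ with growing $\overline p_i$. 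Your approach is more uniform and avoids introducing the auxiliary representations $\mathsf{E}^i$, at the cost of a slightly longer initial computation (two applications of single alignment --- once for $\Lambda^{\text{min}}(u,v)$ and once for $\Lambda^{\text{min}}(\xi_0\zeta,\eta_0\zeta)$ --- rather than one). The resulting decay rate and the pool of unitaries used are the same in both proofs.

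One small point worth making explicit in a final write-up: the ``bookkeeping'' that yields the bound $\sum_j|c_j|\,m^{-d(u_j)\wedge d(v_j)-k|n|}$ relies on the telescoping identity $m^{\overline p_i - (\overline p_{i-1}+|n|)}\cdot m^{\overline p_i}\cdot m^{-\overline p_i}=m^{\overline p_i-\overline p_{i-1}-|n|}$ at each stage, so that the growth in the number of generators is exactly cancelled by the coefficient $m^{-\overline p_i}$ and the next step's bound $m^{-(\overline p_i+|n|)}$. This is straightforward but deserves one line of justification.
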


\begin{proof}  
Let $\epsilon>0$ be given.

(i)  Let $A\in \X_ n$ with $ n\ne  0$. 
We divide the proof into three steps. 

\textit{Step 1.}
Clearly, $A$ is spanned by the standard generators $ s_u s_{w_1} s_{w_2}^* s_v^*$, where $u,v,w_1,w_2\in\Fth$, 
$d(u)\wedge d(v)= 0$,  and $d(w_1)=d(w_2)=:q$, say.
Thanks to the defect free property of $s$, we can (and do) always assume that all summands of $A$ have a uniform such common degree $q$. 
(Actually, one can take it as the join $\vee  q$ of $ q$'s in the summands.) 

Notice that for a fixed $ n\in\bZ^\textsf{k}$, the equation
$
 k- l= n\text{ with }  k,  l\in \bN^\textsf{k} \text{ and }  k\wedge  l= 0
$
has a unique solution $ k= n_+$ and $ l= n_-$. 
Thus one can write $A$ as follows:
\[
A=\sum_{u\in\Lambda^{ n_+},\, v\in\Lambda^{ n_-},\, w_1, w_2\in\Lambda^ q} a_{u,v,w_1,w_2} s_u s_{w_1} s_{w_2}^* s_v^*.
\]
For convenience, set
\[
\S:=\spn\{ \gamma_{q}( s_u) \gamma_{q}( s_v)^*: u\in\Lambda^{ n_+},\, v\in\Lambda^{ n_-}\}.
\] 
From Lemma \ref{L:2basic} it follows that 
$\alpha_{q}(A)\in \fA$. Let us assume that 
\[
\alpha_{q}(A)=\sum_{u,v} a_{u,v}\gamma_{q}( s_u) \gamma_{q}( s_v)^*,
\]
where of course there are only finitely many coefficients $a_{u,v}$, which are not equal to $0$. 

\smallskip
\textit{Step 2.} 
Let $ p= n_++ n_-$. 
Applying Lemma \ref{L:id} (ii) to the Cuntz-type representation $\textsf{E}^1:= \gamma_ q(s)$ and using what we obtain in Step 1, we conclude that 
\[
\alpha_{p}^{\textsf{E}^1}(\alpha_ q(A))=\frac{1}{ m^ p} \gamma_{p}^{\textsf{E}^1}(A_1),
\]
where 
$A_1\in\S$
and its coefficients either come from those of $\alpha_q(A)$ or vanish.  
Straightforward calculations show that 
\[
 \gamma^{\textsf{E}^1}_ p\circ  \gamma_{ q}= \gamma_{ p+ q}.
\]

Since $ \gamma_{ p+ q}(s)=:\textsf{E}^2$ is a Cuntz-type representation of $\Fth$, applying Lemma \ref{L:id} (ii) again to $\textsf{E}^2$ yields 
\[
\alpha_ p^{\textsf{E}^2}(\alpha_{p}^{\textsf{E}^1}(\alpha_ q(A)))
=\frac{1}{ m^{ p}} \alpha^{\textsf{E}^2}_ p(\gamma_{p}^{\textsf{E}^1}(A_1))
=\frac{1}{ m^{2p}} \gamma^{\textsf{E}^2}_ p(\gamma_{p}^{\textsf{E}^1}(A_2))
=\frac{1}{ m^{2 p}} \gamma_{2 p}(A_2),
\]
where $A_2$ shares the same properties of $A_1$.

\textit{Step 3.}
Continuing the above process in an obvious way, we obtain Cuntz-type representations $\textsf{E}^1, \ldots, \textsf{E}^n$ 
and averaging mappings $\alpha_ p^{\textsf{E}^1},\ldots, \alpha_ p^{\textsf{E}^n}$ in $\textsf{Ave}(\O_\theta, \U(\X_ 0))$ such that 
\[
 \alpha_ p^{\textsf{E}^n}\circ\cdots\circ \alpha_ p^{\textsf{E}^1}\circ \alpha_ q(A)=\frac{1}{ m^{n p}} \gamma_{n p}(A_n),
\]
where $A_n\in\S$,
and its coefficients either come from those of $\alpha_q(A)$ or vanish.  
Let $\alpha= \alpha_ p^{\textsf{E}^n}\circ\cdots\circ \alpha_ p^{\textsf{E}^1}\circ \alpha_ q$,
which is clearly in $\textsf{Ave}(\O_\theta, \U(\X_ 0))$. 
By the definition of $\alpha_q(A)$, one has $\sum_{u,v}|a_{u,v}|$ is finite.  
It follows from the properties of $A_n$ that 
$\|A_n\| \le \sum|a_{u,v}|<\infty$ as $\|\gamma_{m}( s_{u} s_{v})^*\|\le 1$ for all $m\in\bN^{\textsf{k}}$. 
Hence, when $n$ is large enough, we have 
\[
\|\alpha(A)\|=\frac{1}{ m^{n p}}\| \gamma_{n p}(A_n)\|\le \frac{1}{ m^{n p}}\|A_n\|\le \frac{1}{ m^{n p}}\sum|a_{u,v}|<\epsilon.
\]
Here we use the facts that $ \gamma_{np}$ is isometric since it is an endomorphism on $\O_\theta$ and $\O_\theta$ is simple.

\medskip

(ii) This is completely similar to the proof of \cite[Proposition 4(iii)]{Arc80}. Let $A=\sum_{ n \ne  0} A_ n$ in ${}^\circ{\O_\theta}$ with $A_ n\in\X_ n$. 
We label the set of non-zero summands $\{A_ n\ne 0:  n\ne  0 \}$ of $A$ as $\{A_1, \ldots, A_L\}$. 
So $L$ is finite as $A\in{}^\circ\O_{\theta}$. 

By (i) above, there is $\alpha_1\in \textsf{Ave}(\O_\theta, \U(\X_ 0))$ such that $\|\alpha_1(A_1)\| <\frac{\epsilon}{L}$. 
Apply (i) above to $\alpha_1(A_2)$ by using Lemma \ref{L:predeg},  
and we get $\alpha_2 \in\textsf{Ave}(\O_\theta, \U(\X_ 0))$ such that $\|\alpha_2\circ\alpha_1(A_2)\| <\frac{\epsilon}{L}$.
Continuing this way, we obtain averaging operators $\alpha_1, \ldots, \alpha_{L}$ in $\textsf{Ave}(\O_\theta, \U(\X_ 0))$ such that  
\[
\|\alpha_{L}\circ\cdots\circ\alpha_1(A_{L})\| <\frac{\epsilon}{L}.
\]
Hence 
\[
\|\alpha_{L}\circ\cdots\circ\alpha_1(A_i)\| <\frac{\epsilon}{L}\quad (1\le i\le L),
\]
as every $\alpha_i$ is contractive. Therefore, $\alpha=\alpha_{L}\circ\cdots\circ\alpha_1$ belongs to $\textsf{Ave}(\O_\theta, \U(\X_ 0))$ and 
satisfies 
\[
\|\alpha(A)\|<\epsilon,
\]
as required. 
\end{proof}

\begin{rem}
Lemma \ref{L:lem} does not hold true in general if $\Fth$ does not satisfy the little pull-back property. 
For instance, consider a periodic $\textsf{k}$-graph $\Fth$. It is shown in \cite{DYrepk} that 
there is a central unitary $W=\sum_{u\in\Lambda^a} s_us_{\gamma(u)}^*$ in $\O_\theta$, where $\gamma$ is a bijection from $\Lambda^a$
to $\Lambda^b$ with $a,b>0$ in $\bN^\textsf{k}$ and $a\wedge b=0$. 
It is rather easy to check that 
$\alpha(W)=W$ for \textit{every} averaging operator $\alpha$. So one always has $\|\alpha(W)\|=1$.
\end{rem}

\subsection{The Dixmier property of $\O_\theta^\sigma$}

We already know that $\O_\theta$ has the Dixmier property since it is simple and has no tracial state (cf. Section \ref{S:pre}). 
The following shows that $\O_\theta$ has a stronger property when $\Fth$ has the little pull-back property. 

\begin{prop}
\label{P:SDixmier}
Suppose that $\Fth$ has the little pull-back property. 
Then for each $A\in\O_\theta$, 
\[
\ol{\{\alpha(A): \alpha\in \textsf{Ave}(\O_\theta, \U(\X_ 0))\}}^{\|\cdot\|}\cap \bC I \ne \mt.
\]
\end{prop}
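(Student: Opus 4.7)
The plan is to leverage a two-step averaging: Lemma \ref{L:lem}(ii) crushes the non-zero-degree spectral components of $A$, and the classical Dixmier property of the UHF core $\fF$ then collapses what remains to a scalar. Since every $\alpha \in \textsf{Ave}(\O_\theta, \U(\X_0))$ is contractive, a routine $\epsilon$-approximation reduces the claim to $A \in {}^\circ\O_\theta$, where the $\bZ^\textsf{k}$-grading furnishes a finite decomposition $A = A_0 + \sum_{n \ne 0} A_n$ with $A_0 \in \X_0$ and $A_n \in \X_n$.

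Fix $\epsilon > 0$, and set $R := \sum_{n \ne 0} A_n$. Lemma \ref{L:lem}(ii) supplies $\alpha_1 \in \textsf{Ave}(\O_\theta, \U(\X_0))$ with $\|\alpha_1(R)\| < \epsilon$, and Lemma \ref{L:predeg} at $n=0$ ensures $\alpha_1(A_0) \in \X_0 \subseteq \fF$. Since $\fF = \ol{\bigcup_n \fF_n}$ is UHF with unique tracial state $\tau$, approximate $\alpha_1(A_0)$ within $\epsilon$ by some $B \in \fF_N$, and then invoke the elementary Dixmier property of the full matrix algebra $\fF_N$ to produce $\alpha_2 \in \textsf{Ave}(\O_\theta, \U(\X_0))$ (built from unitaries of $\fF_N \subseteq \X_0$) satisfying $\alpha_2(B) = \tau(B) I$. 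Setting $\alpha := \alpha_2 \circ \alpha_1 \in \textsf{Ave}(\O_\theta, \U(\X_0))$ and $c := \tau(B)$, contractivity of $\alpha_2$ yields
\[
\|\alpha(A) - cI\| \le \|\alpha_2(\alpha_1(A_0) - B)\| + \|\alpha_2(\alpha_1(R))\| \le \|\alpha_1(A_0) - B\| + \|\alpha_1(R)\| < 2\epsilon.
\]
As $\epsilon$ was arbitrary, the norm-closed orbit of $A$ under $\textsf{Ave}(\O_\theta, \U(\X_0))$ meets $\bC I$.

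The only substantive content is the off-diagonal killing accomplished by $\alpha_1$ in Lemma \ref{L:lem}(ii); this is also the unique place where the little pull-back hypothesis enters. The matrix-algebra averaging that produces $\alpha_2$ is elementary and presents no additional obstacle, since $\U(\fF_N) \subseteq \U(\X_0)$. Thus the real difficulty sits upstream, in the delicate inductive construction of Lemma \ref{L:lem}(ii) that chains successive applications of Lemma \ref{L:id}(ii) against the endomorphisms $\gamma_p$; the present proposition is essentially a packaging of that lemma together with the standard UHF Dixmier step.
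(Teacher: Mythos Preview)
Your proof is correct and follows essentially the same two-step strategy as the paper: kill the non-zero-degree components via Lemma~\ref{L:lem}(ii), then average the degree-zero part to a scalar using the Dixmier property of a full matrix subalgebra of $\fF$. Two minor remarks: the approximation of $\alpha_1(A_0)$ by some $B \in \fF_N$ is unnecessary, since $\alpha_1(A_0) \in \X_0 = \bigcup_n \fF_n$ already lies in some $\fF_N$; and your final sentence hides a small quantifier swap (from ``for every $\epsilon$ there is some scalar within $\epsilon$'' to ``a fixed scalar lies in the closure''), which the paper dispatches by citing \cite[Lemma~2.8]{Arc77} and which you can handle with a one-line compactness argument on the bounded family of scalars $c$.
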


\begin{proof}
The proof here is borrowed from \cite[Theorem 5]{Arc80}. Let $A\in{}^\circ\O_{\theta}$ and arbitrarily choose $\epsilon>0$. It is known that 
$A=\sum_{ n}A_ n$ with $A_ n \in\X_ n$ (cf., e.g., \cite{DYrepk, Raeburn}). 
By Lemma \ref{L:lem}, there is $\alpha_1\in\textsf{Ave}(\O_\theta, \U(\X_ 0))$ such that
$\|\alpha_1(A-A_ 0)\|<\epsilon/2$. Also, there is $p\in\bN$ such that $\alpha_1(A_ 0)\in\fF_p$, which is isomorphic to 
$M_{\Pi_{i=1}^\textsf{k} m_i^p}$. But $M_{\Pi_{i=1}^\textsf{k} m_i^p}$ has the Dixmier property (cf., e.g., \cite{Arc80}). So there are $\lambda\in\bC$ and  
$\alpha_2\in\textsf{Ave}(\O_\theta, \U(\X_ 0))$ such that 
$\|\alpha_2\circ\alpha_1(A_ 0-\lambda)\|<\epsilon/2$. Thus $\|\alpha_2\circ\alpha_1(A)-\lambda\|<\epsilon$. 
Then apply \cite[Lemma 2.8]{Arc77} to finish our proof.
\end{proof}

\begin{cor}
\label{C:DixPro}
Suppose that $\Fth$ has the little pull-back property. 
Let $\A$ be a unital C*-subalgebra of $\O_\theta$ such that $\X_ 0\subset \A$. Then 
$\A$ is simple and has the Dixmier property. 
\end{cor}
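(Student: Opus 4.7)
The plan is to derive everything from Proposition~\ref{P:SDixmier} together with the standard invariance properties of the distinguished state $\omega$. The key initial observation is that since $\X_0\subseteq\A$, every unitary in $\U(\X_0)$ lies in $\U(\A)$, and so any averaging operator $\alpha\in\textsf{Ave}(\O_\theta,\U(\X_0))$, when restricted to $\A$, in fact takes $\A$ into $\A$ and defines an element of $\textsf{Ave}(\A,\U(\A))$. This is just because convex combinations of conjugations $\sum_i\lambda_i U_iAU_i^*$ with $U_i\in\U(\X_0)\subseteq\U(\A)$ and $A\in\A$ stay inside $\A$.

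With this remark in hand, I would first verify the Dixmier property of $\A$. Pick any $A\in\A\subseteq\O_\theta$. By Proposition~\ref{P:SDixmier} there exist $\alpha_n\in\textsf{Ave}(\O_\theta,\U(\X_0))$ and $\lambda\in\bC$ with $\|\alpha_n(A)-\lambda I\|\to 0$. By the observation above, each $\alpha_n$ restricts to a member of $\textsf{Ave}(\A,\U(\A))$, and $\lambda I\in\bC I\subseteq\Z(\A)$ trivially. Hence $\A$ satisfies the Dixmier property.

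For simplicity I would invoke the Rieffel criterion recalled in Section~\ref{SS:DP}: it suffices to prove the weak Dixmier property. The only issue is to ensure, for $A\in\A^{+}\setminus\{0\}$, that the scalar $\lambda$ produced above is strictly positive rather than zero. Here I would use that $\omega$ is invariant under conjugation by unitaries in $\fF\supseteq\X_0$: for $U\in\U(\fF)$,
\[
\omega(UAU^{*})=\tau\bigl(\Phi_0(UAU^{*})\bigr)=\tau\bigl(U\Phi_0(A)U^{*}\bigr)=\tau(\Phi_0(A))=\omega(A),
\]
using the $\fF$-bimodularity of $\Phi_0$ and the trace property of $\tau$ on the UHF algebra $\fF$. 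Hence every $\alpha\in\textsf{Ave}(\O_\theta,\U(\X_0))$ preserves $\omega$, and therefore $\lambda=\lim_n\omega(\alpha_n(A))=\omega(A)$. Since $A\in\A^{+}\setminus\{0\}$ and $\omega$ is faithful on $\O_\theta$, we conclude $\lambda=\omega(A)>0$, so $\bC I\subseteq\ol{\spn\{\alpha(A):\alpha\in\textsf{Ave}(\A,\U(\A))\}}^{\|\cdot\|}$, which is the weak Dixmier property and thus yields simplicity of $\A$.

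I do not expect any serious obstacle: the whole content of the proof lies in (a)~noticing that averaging by $\U(\X_0)$ stays inside $\A$, so that Proposition~\ref{P:SDixmier} can be read inside $\A$, and (b)~identifying the scalar limit as $\omega(A)$ to upgrade the Dixmier property to the weak Dixmier property (and hence simplicity). The possibly subtle point is step (b); one must resist the temptation to try to use traciality inside $\A$ directly (there need be no trace available), and instead exploit the external state $\omega$ on $\O_\theta$.
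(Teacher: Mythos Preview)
Your argument is correct and follows the same core strategy as the paper: observe that $\U(\X_0)\subseteq\U(\A)$, so Proposition~\ref{P:SDixmier} can be read inside $\A$, and then invoke the Riedel/Haagerup--Zsid\'o equivalences from Section~\ref{SS:DP}. The paper's own proof is terser: it simply asserts that Proposition~\ref{P:SDixmier} yields the weak Dixmier property for $\A$, and then cites \cite{Rie82} for simplicity and \cite{HaaZsi84} for the Dixmier property.

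Where you go further is in justifying that the scalar limit $\lambda$ is nonzero for $A\in\A^{+}\setminus\{0\}$, which is precisely what the weak Dixmier property requires and which the paper leaves implicit. Your identification $\lambda=\omega(A)$ via the $\U(\fF)$-invariance of $\omega$ (using the $\fF$-bimodularity of $\Phi_0$ and the trace property of $\tau$) is clean and correct, and it closes that gap neatly. One minor organizational remark: once you have $\lambda I$ in the closure of the averaging orbit for \emph{every} $A\in\A$, you already have the Dixmier property of $\A$ outright (since $\bC I\subseteq\Z(\A)$ regardless of whether $\lambda=0$); the $\omega$ argument is then needed only for the simplicity half, exactly as you present it.
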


\begin{proof}
Since $\X_ 0\subset \A$, clearly $\U(\X_0)\subset \U(\A)$.
Hence the restriction $\alpha_i|_\A$ is an averaging mapping on $\A$ for every $\alpha_i$'s in the proof of Proposition \ref{P:SDixmier}.
It follows from Proposition \ref{P:SDixmier} that $\A$ has the weak Dixmier property. Therefore, $\A$ is simple and has the 
Dixmier property by \cite{HaaZsi84, Rie82} (also cf. Section \ref{SS:DP}), 
\end{proof}


\begin{cor}
\label{C:unitr}
Suppose that either $\Fth$ has the pull-back property, or that the intrinsic group of $\Fth$ has rank $0$.
Then $\O_\theta^\sigma$ has a unique tracial state. 
\end{cor}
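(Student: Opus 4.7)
The plan is to split on the hypothesis and in each case reduce quickly to material already established above.

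First, if $\rank G = 0$, then Proposition \ref{P:Ofixed} collapses $\O_\theta^\sigma$ to the core $\fF$, which is the $\prod_i m_i^\infty$-UHF algebra; such algebras carry a unique tracial state (the product state $\tau$), so the corollary is immediate in this case.

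Suppose instead that $\Fth$ has the little pull-back property. The first step is to observe that $\X_0 \subseteq \O_\theta^\sigma$: the explicit formula \eqref{E:modaut} shows that every standard generator $s_u s_v^*$ with $d(u) = d(v)$ is fixed by $\sigma_t$ for all $t \in \bR$. Consequently $\O_\theta^\sigma$ is a unital C*-subalgebra of $\O_\theta$ containing $\X_0$, and Corollary \ref{C:DixPro} applies: $\O_\theta^\sigma$ is simple and has the Dixmier property. The Haagerup--Zsid\'o theorem recalled in Section \ref{SS:DP} then forces $\O_\theta^\sigma$ to admit at most one tracial state.

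To finish, I would exhibit at least one such tracial state; the natural candidate is the restriction $\omega|_{\O_\theta^\sigma}$. If $A \in \O_\theta^\sigma$, then $\sigma_t(A) = A$ for all real $t$, hence $A$ is entire analytic with $\sigma_z(A) = A$ for every $z \in \bC$. The KMS relation $\omega(AB) = \omega(B \sigma_{-i}(A))$ for $B \in \O_\theta$ then degenerates to $\omega(AB) = \omega(BA)$, giving traciality on $\O_\theta^\sigma$; faithfulness is inherited from $\omega$. The main obstacle is essentially absent: the heavy lifting is Corollary \ref{C:DixPro} (for the little pull-back case) and Proposition \ref{P:Ofixed} (for the rank-zero case), both already in hand. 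The only substantive, though standard, point is the KMS-traciality argument for the restricted state, which is the sole place modular theory beyond its formal consequences is used.
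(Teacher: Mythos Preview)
Your proof is correct and follows essentially the same route as the paper: both split on the hypothesis, use Proposition~\ref{P:Ofixed} for the rank-zero case, and invoke Corollary~\ref{C:DixPro} (via $\X_0 \subseteq \O_\theta^\sigma$) together with the Haagerup--Zsid\'o result for the little pull-back case. Your KMS argument for the existence of a tracial state is a clean and more explicit version of the paper's remark that the tracial state is ``extended from $\tau$''; the two are the same state $\omega|_{\O_\theta^\sigma} = \tau\circ\Phi_0|_{\O_\theta^\sigma}$.
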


\begin{proof}
If the intrinsic group of $\Fth$ has rank $0$, clearly the conclusion holds as $\O_\theta^\sigma=\fF$ by 
Proposition \ref{P:Ofixed}. 

Suppose $\Fth$ has the little pull-back property. On one hand, it follows from Proposition \ref{P:Ofixed} and Corollary \ref{C:DixPro} 
that $\O_\theta^\sigma$ has at most one tracial state. On the other hand, 
it is not hard to check that it has a tracial state, which is extended from 
the tracial state $\tau$ of $\fF$. 
\end{proof}

\section{The factoriality and type of $\pi(\O_\theta)''$}
\label{S:char}

We first recall the definition of KMS-states for C*-dynamical systems from \cite[Chapter 5]{BraRob}.
Let $(\fA, \bR, \varrho)$ be a C*-dynamical system. The state $\varphi$ over $\fA$ is said to be a
\textit{$\varrho$-KMS state  at value $\beta\in\bR$}, or a \textit{$(\varrho, \beta)$-KMS state}, if
\begin{align}
\label{E:KMS}
\varphi(AB)=\varphi(B\varrho_{i\beta}(A))
\end{align}
for all $A,B$ entire for $\rho$.
A $\varrho$-KMS state at value $\beta=-1$ is simply called a \textit{$\varrho$-KMS state}.

Recently, Huef, Laca,  Raeburn and Sims \cite{HLRS13} study in detail the KMS states of some dynamical systems associated to arbitrary finite \textsf{k}-graphs without sources.
(Also refer to it for a nice introduction to KMS states.) In
particular, it is shown that  if $\{\ln m_\ell: 1\le \ell\le \textsf{k}\}$ is rationally independent,
then $\O_\theta$ has a unique 
$\sigma$-KMS state (\cite[Theorem 7.2]{HLRS13}, also see \cite{Yang1} for $\textsf{k}=2$). 
Moreover, they remark in \cite[Example 7.3]{HLRS13} that the 
condition of $\{\ln m_\ell: 1\le \ell\le \textsf{k}\}$ being rationally independent is necessary. 
But one should notice that the example provided there is a \textit{periodic} single vertex 2-graph. Actually, it is the flip algebra using the terminology of \cite{DPY1, DPYdiln, DYperiod}. 

The following result generalizes \cite[Theorem 7.2]{HLRS13} in the case of single vertex \textsf{k}-graphs and
\cite[Proposition 4.4]{Yang2}.  
The special case of $\theta=\id$ is proven in \cite{Yang2} using a more direct method. 

\begin{prop}
\label{P:unitra}
Suppose that either $\Fth$ has the little pull-back property, or the intrinsic group of $\Fth$ has rank $0$. 
Then the state $\omega$ is the unique $\sigma$-KMS state over $\O_\theta$. 
\end{prop}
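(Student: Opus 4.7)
The plan is to reduce the uniqueness of $\sigma$-KMS states to the uniqueness of tracial states on $\O_\theta^\sigma$ supplied by Corollary~\ref{C:unitr}. First of all, $\omega$ is itself a $\sigma$-KMS state: this is built into the Tomita--Takesaki modular theory developed in Section~\ref{S:stru}, since $\sigma_t$ is by construction the modular automorphism group associated to $\omega$. The task therefore reduces to showing that an arbitrary $\sigma$-KMS state $\varphi$ on $\O_\theta$ must coincide with $\omega$.

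The first step I would take is to show that $\varphi|_{\O_\theta^\sigma}$ is a tracial state. For $A\in\X_n$ the formula \eqref{E:modaut} analytically continues to give $\sigma_{-i}(A)=m^{-n}A$, so the KMS condition \eqref{E:KMS} yields
\[
\varphi(AB)=\varphi(B\sigma_{-i}(A))=m^{-n}\varphi(BA)\qforal B\in\O_\theta,
\]
where the right-hand side extends from entire $B$ to all $B\in\O_\theta$ by continuity (since $A$ is entire for $\sigma$). When $n\in G$ one has $m^{-n}=1$, so this simplifies to $\varphi(AB)=\varphi(BA)$. By Proposition~\ref{P:Ofixed} the $*$-subalgebra $\spn\bigcup_{n\in G}\X_n$ is norm-dense in $\O_\theta^\sigma$, and linearity together with continuity of $\varphi$ in each argument extends the trace identity to all of $\O_\theta^\sigma$. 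Under either of the two hypotheses, Corollary~\ref{C:unitr} now forces $\varphi|_{\O_\theta^\sigma}=\omega|_{\O_\theta^\sigma}$.

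Next I would exploit the standard fact (see \cite[Ch.~5]{BraRob}) that every KMS state is invariant under the associated flow. For $A\in\X_n$ this gives $\varphi(A)=\varphi(\sigma_t(A))=m^{-itn}\varphi(A)$ for every $t\in\bR$. When $n\notin G$ the vector $n\cdot\ln m$ is nonzero by the very definition \eqref{E:groupG} of $G$, so $t\mapsto m^{-itn}$ is non-constant, forcing $\varphi(A)=0$. The same argument applies to $\omega$.

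Combining these observations, $\varphi$ and $\omega$ agree on every spectral subspace $\X_n$: on those with $n\in G$ the subspace lies inside $\O_\theta^\sigma$ where the two restrictions already coincide, and on those with $n\notin G$ both states vanish. Hence $\varphi=\omega$ on the norm-dense $*$-subalgebra ${}^\circ\O_\theta=\spn\bigcup_n\X_n$, and then on all of $\O_\theta$ by continuity. The genuine difficulty has already been dispatched in Section~\ref{S:Dix}, namely the uniqueness of the trace on $\O_\theta^\sigma$ (Corollary~\ref{C:unitr}); once that is available, what remains is essentially bookkeeping with the KMS relation, the explicit formula \eqref{E:modaut}, and the structural description of $\O_\theta^\sigma$ from Proposition~\ref{P:Ofixed}.
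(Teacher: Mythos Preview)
Your proof is correct and follows essentially the same approach as the paper: reduce to the uniqueness of the trace on $\O_\theta^\sigma$ (Corollary~\ref{C:unitr}) and use $\sigma$-invariance of KMS states to handle the remaining spectral subspaces. The paper packages the second step via a conditional expectation $\E:\O_\theta\to\O_\theta^\sigma$ obtained by integrating over a torus (and cites \cite{HLRS13} separately for the rank-$0$ case), whereas you argue directly on each $\X_n$; your version is slightly more direct and treats both hypotheses uniformly.
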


\begin{proof}
As in \cite[Proposition 5.4]{Yang1}, it is easy to see that $\omega$ is a $\sigma$-KMS over $\O_\theta$. It is left to show that $\omega$ is the only one. 

If the intrinsic group of $\Fth$ has rank $0$, namely, $\{\ln m_\ell: 1\le \ell\le \textsf{k}\}$ is rationally independent, this case is done by \cite[Theorem 7.2]{HLRS13}.  

We now suppose that $\Fth$ has the little pull-back property.
Consider the set $\{\ln m_\ell: 1\le \ell\le \textsf{k}\}$. Assume that it  has exactly $n$ rationally independent elements. Clearly $1\le n\le \textsf{k}$ as all $m_\ell$'s are $>1$. 
WLOG, say $\ln m_1, \ldots, \ln m_n$ are rationally independent. 
Therefore, one can simply rewrite the action $\sigma$ as follows:
\begin{align}
\label{E:sigma}
\sigma_t=\gamma_{(m_1^{-it},\ldots, m_\textsf{k}^{-it})}=\gamma_{(z_1, ..., z_n,\, f_1, \ldots,\, f_{\textsf{k}-n})},
\end{align}
where $z_\ell=m_\ell^{-it}$ ($1\le \ell\le n$), and $f_j=f_j(z_1,\ldots, z_n)$ ($1\le j\le \textsf{k}-n)$ is a $\bT$-valued function of $z_1,...,z_n$
(with rational coefficients).  
Since $\{\ln m_\ell:1\le \ell\le  n\}$ is rationally independent, $\sigma$ yields an action of $\bT^n$ on $\O_\theta$. 
So there is a faithful conditional expectation $\E$ from $\O_\theta$ onto $\O_\theta^\sigma$ (cf. \cite{BO08}).
From Section 2, it is not hard to see that $\E$ is given by 
\begin{align}
\label{E:sigam}
\E(A)=\int_{\bT^n} \gamma_{(z_1, ..., z_n,\, f_1, \ldots,\, f_{\textsf{k}-n})}(A)\, dz\qforal A\in \O_\theta. 
\end{align}

Now we assume that $\phi$ is also a $\sigma$-KMS over $\O_\theta$. 
On one hand, by Corollary \ref{C:unitr} we have
\begin{align}
\label{E:phiome}
\phi\circ \E= \tau\circ \Psi\circ \E=\omega\circ \E,
\end{align}
where $\Psi$ is the canonical faithful expectation of $\O_\theta^\sigma$ onto $\fF$, and $\tau\circ \Psi$ is the unique tracial state of $\O_\theta^\sigma$. On the other hand, every
$\sigma$-KMS sate is invariant with respect to $\sigma$ (\cite{BraRob}). So 
$\omega=\omega\circ \sigma$ and 
$\phi=\phi\circ \sigma$. 
It now follows from \eqref{E:sigma} and \eqref{E:sigam} that $\phi=\phi\circ \E$ and $\omega=\omega\circ\E$. This
implies $\phi=\omega$ from \eqref{E:phiome}.
\end{proof}

\begin{rem}
As in \cite[Lemma 4.1]{Yang2}, every KMS state over $\O_\theta$ for $\sigma$ is a $\sigma$-KMS state. 
One can also borrow the proof from \cite[Proposition 5.4 (iii)]{Yang1} in the case of rational independence. 
\end{rem}

Before giving our main result of this paper, we need to introduce more notation.
Let $\fM$ be a $\sigma$-finite von Neumann algebra. The \textit{Connes spectrum $\S(\fM)$} is the intersection over all faithful
normal states of the spectra of their corresponding modular operators \cite{BraRob}.
A type III factor $\fM$ is said to be of
\begin{alignat*}{3}
type\ III_0 \quad \text{if} & \quad \S(\fM)=\{0, 1\};\\
type\ III_1 \quad \text{if} &\quad  \S(\fM)=[0, \infty);\\
type\ III_\lambda \quad \text{if} &\quad  \S(\fM)=\{0, \lambda^n:n\in \bZ\} \quad (0<\lambda<1).
\end{alignat*}
See, for example, \cite{BraRob, KadRin, Take} for more information.

\begin{thm}
\label{T:char}
Let $G$ be the intrinsic group of $\Fth$.
\begin{itemize}
\item[(i)] If $\rank G=0$, then $\pi(\O_\theta)''$ is an AFD type III$_1$ factor. 

\item[(ii)] If $\Fth$ has the little pull-back property, 
then $\pi(\O_\theta)'' $ is an AFD type III factor.
Moreover, the type of  $\pi(\O_\theta)''$ is determined as follows: 
\begin{itemize}
\item[(III$_1$)] If $\rank G\ne \textsf{k}-1$, 
then $\pi(\O_\theta)''$ is of type III$_1$. 

\item[(III$_\lambda$)] If $\rank G =\textsf{k}-1$,
then $\pi(\O_\theta)''$ is of type III$_\lambda$, where $0<\lambda<1$ is determined by $m_1,\ldots, m_{\textsf{k}}$.
\end{itemize}
\end{itemize}
\end{thm}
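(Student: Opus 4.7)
The plan is to combine the uniqueness of the $\sigma$-KMS state (Proposition \ref{P:unitra}) with the uniqueness of the trace on $\O_\theta^\sigma$ (Corollary \ref{C:unitr}) to obtain factoriality, and then to compute the Connes spectrum of $\pi(\O_\theta)''$ by reading off the eigenvalues of $\Delta$ directly from \eqref{E:modaut}. The argument will proceed in four stages: factoriality and AFD; identification of the centralizer as a factor; application of Connes' theorem identifying $\S(\M)$ with $\mathrm{Sp}(\Delta_\omega)$; and elementary analysis of the multiplicative subgroup $\{m^g : g \in \bZ^\textsf{k}\} \subseteq \bR^*_+$.

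For factoriality, Proposition \ref{P:unitra} gives that $\omega$ is the unique $\sigma$-KMS state on $\O_\theta$ under either hypothesis of the theorem. The standard central-decomposition theory of KMS states (cf.\ \cite[Section 5.3.3]{BraRob}) identifies extremal $\sigma$-KMS states with factor states in the associated GNS representation, so uniqueness of $\omega$ forces $\M := \pi(\O_\theta)''$ to be a factor. Injectivity of $\M$ follows from nuclearity of $\O_\theta$ (standard for row-finite $\textsf{k}$-graph C*-algebras), and Connes' classification then yields AFD. Since the modular automorphism group $\sigma$ is nontrivial (as each $m_i > 1$), $\omega$ is not tracial and so $\M$ is of type III.

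Next I would identify the centralizer $\M_\omega$. Because $\omega$ is faithful and $\sigma$ is the restriction of the modular automorphism group of the normal extension of $\omega$ to $\O_\theta$, a routine argument yields $\M_\omega = \pi(\O_\theta^\sigma)''$. By Corollary \ref{C:unitr}, $\O_\theta^\sigma$ has a unique tracial state; normality of $\omega|_{\M_\omega}$ together with the weak density of $\O_\theta^\sigma$ in $\M_\omega$ then forces $\omega|_{\M_\omega}$ to be the unique faithful normal tracial state on $\M_\omega$, so $\M_\omega$ is a $\mathrm{II}_1$ factor.

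To finish I would invoke Connes' theorem (see, e.g., \cite{Str, Take}): for a factor $\M$ with a faithful normal KMS state $\omega$ whose centralizer $\M_\omega$ is a factor, $\S(\M) = \mathrm{Sp}(\Delta_\omega)$. From \eqref{E:modaut}, each $s_us_v^*$ is an eigenvector of $\Delta$ with eigenvalue $m^{d(v)-d(u)}$, and such vectors are total in $L^2(\O_\theta)$; hence
\[
\mathrm{Sp}(\Delta) \;=\; \overline{\{m^g : g \in \bZ^\textsf{k}\}} \cup \{0\}.
\]
The group $E := \{m^g : g \in \bZ^\textsf{k}\} \subseteq \bR^*_+$ is the image of $g \mapsto m^g$ with kernel $G$, so $E \cong \bZ^{\textsf{k}-\rank G}$. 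When $\rank G = \textsf{k}-1$, $E$ is infinite cyclic, generated by some $\lambda \in (0,1)$ explicitly determined by $m_1,\ldots,m_\textsf{k}$; thus $\S(\M) = \{0\}\cup\{\lambda^n : n\in\bZ\}$ and $\M$ is of type III$_\lambda$. Otherwise $E$ has rank $\ge 2$, so $\log E$ is a non-cyclic subgroup of $\bR$ and therefore dense (every subgroup of $\bR$ is either cyclic or dense), whence $\S(\M) = [0,\infty)$ and $\M$ is of type III$_1$. This covers case (i) (where $\rank G = 0$ and $E$ has rank $\textsf{k}\ge 2$) as well as both subcases of (ii). The main obstacle is expected to be the careful identification $\M_\omega = \pi(\O_\theta^\sigma)''$ and the lift of uniqueness of the tracial state from Corollary \ref{C:unitr} (C*-level) to uniqueness of the normal tracial state on the von Neumann completion; once these are in place, the spectrum computation is routine.
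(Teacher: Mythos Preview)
Your proposal is correct and follows essentially the same route as the paper: unique $\sigma$-KMS state gives factoriality via \cite[Theorem 5.3.30]{BraRob}, Corollary \ref{C:unitr} makes $\pi(\O_\theta^\sigma)''$ a factor (the paper cites \cite{Was91} for the passage from unique C*-trace to factoriality that you flag as the main obstacle), and then \cite[Section 28.3]{Str} identifies $\S(\M)$ with $\mathrm{Sp}(\Delta)$. The only cosmetic difference is in the spectrum computation: you argue abstractly via $E\cong\bZ^{\textsf{k}-\rank G}$ and the dichotomy ``subgroups of $\bR$ are cyclic or dense,'' whereas the paper picks indices $i,j$ with $\ln m_i/\ln m_j\notin\bQ$ in the III$_1$ case and writes out $\lambda$ explicitly in the III$_\lambda$ case.
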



\begin{proof}
By Proposition \ref{P:unitra} and \cite[Theorem 5.3.30]{BraRob}, $\pi(\O_\theta)''$ is factor.
Clearly, it is of type III. Since $\O_\theta$ is known to be amenable, $\pi(\O_\theta)''$ has to be AFD.
Hence $\pi(\O_\theta)''$ is an AFD factor. 

We now determine the type of $\pi(\O_\theta)''$. The following proof is similar to \cite[Theorem 5.2]{Yang2}. 
By Corollary \ref{C:unitr}, $\pi(\O_\theta^{\sigma})''$ is a factor (cf., e.g., \cite{Was91}). It follows
from \cite[Section 28.3]{Str} that the Connes spectrum $\S(\pi(\O_\theta)'')$ coincides
with the spectrum $\rm{Sp}(\Delta) $ of the modular operator $\Delta$ of $\omega$. That is, $\S(\pi(\O_\theta)'')=\rm{Sp}(\Delta)$.

\smallskip
(III$_1$) Suppose that $\rank G\ne \textsf{k}-1$. Then
$\frac{\ln m_i}{\ln m_j}\not\in\bQ$ for some $i,j$. So
\begin{align*}
\rm{Sp}(\Delta)
&=\cl {\{ m^ n: n\in\bZ^\textsf{k}\}} \\
&\supseteq \cl {\{m_i^{n_i}m_j^{n_j}:n_i, n_j\in\bZ\}}\\
&=[0,\infty),
\end{align*}
where, for a set $A\subseteq\bR$, $\cl A$ means the closure of $A$. Hence $\pi(\O_\theta)''$ is of type III$_1$. 

(III$_\lambda$) Suppose that  $\rank G=\textsf{k}-1$. 
Equivalently, one has $\frac{\ln m_1}{\ln m_j}\in\bQ$ for $2\le j\le \textsf{k}$. 
So there are natural numbers $a_j$ and $b_j$ ($2\le j \le \mathsf{k}$) such that 
$m_1^{a_j}=m_j^{b_j}$ with $\gcd(a_i,b_j)=1$  for $2\le j\le \textsf{k}$. 
Then 
\begin{align*}
\rm{Sp}(\Delta)
&=\cl {\{ m^ n: n\in\bZ^\textsf{k}\}} \\
&=\cl \left\{m_1^{n_1}\Pi_{j=2}^\textsf{k} m_1^{(a_j/b_j)n_j}:n_1,...,n_\textsf{k}\in\bZ\right\} \\
&=\cl \left\{m_1^{n_1+\sum_{j=2}^\textsf{k} (a_jn_j/b_j)}:n_1,...,n_\textsf{k}\in\bZ\right\} \\
&=\cl\left\{\left(m_1^{-\frac{1}{b_2\cdots b_\textsf{k}}}\right)^{-b_2\cdots b_\textsf{k} n_1-\sum_{j=2}^\textsf{k} a_jn_j\Pi_{\ell \ne j}b_\ell}: n_1,...,n_\textsf{k}\in\bZ\right\} \\
&=\left\{0, \left(m_1^{-\frac{1}{b_2\cdots b_\textsf{k}}}\right)^N: N\in \bZ\right\}
\end{align*}
So $\pi(\O_\theta)''$ is of type III$_\lambda$ with $\lambda=m_1^{-\frac{1}{b_2\cdots b_\textsf{k}}}$. 
 \end{proof}

\subsection*{Acknowledgement} The author would like to thank the referee for his/her useful comments. 


\end{document}